\documentclass[11pt]{article}  
\usepackage{amsmath}
\usepackage{amssymb}
\usepackage{theorem}
\usepackage{euscript}
\usepackage{pstricks}
\usepackage{authblk}
\usepackage{verbatim}
\usepackage{hyperref}

\usepackage{ifthen}
\usepackage{xifthen}
\usepackage{xcolor}
\usepackage{fullpage}
\usepackage[english]{babel}
\usepackage{mathtools}
\usepackage{hyperref}
\usepackage{color}

\usepackage{hyperref}
\hypersetup
{
	colorlinks,
	citecolor=black,
	filecolor=black,
	linkcolor=black,
	urlcolor=magenta
}
\hypersetup{linktocpage} 

\newtheorem{theorem}{Theorem}[section]
\newtheorem{lemma}[theorem]{Lemma}

\numberwithin{equation}{section}

\newtheorem{remark}{Remark}[section]
\numberwithin{equation}{section}

\def\ZZ{{\mathbb Z}}

\def\RRd{{\mathbb R}^d}

\def\CC{{\mathbb C}}
\def\CC{{\mathbb C}}

\def\NN{{\mathbb N}}
\def\RR{{\mathbb R}}

\def\UU{{\mathbb U}}

\def\RRd{{\mathbb R}^d}

\def\RRi{{\mathbb R}^\infty}

\def\RRi{{\mathbb R}^\infty}

\def\RRp{{\mathbb R}_+}
\def\RRip{{\mathbb R}^{\infty}_+}

\def\UUi{{\mathbb U}^{\infty}}

\def\Dd{{\mathcal D}}

\def\Jj{{\mathcal J}}

\def\CC{{\mathbb C}}
\def\ZZ{{\mathbb Z}}
\def\NN{{\mathbb N}}
\def\RR{{\mathbb R}}

\def\FF{{\mathbb F}}

\def\RRd{{\mathbb R}^d}

\def\supp{\operatorname{supp}}
\def\dv{\operatorname{div}}

\newcommand{\bb}{{\boldsymbol{b}}}

\newcommand{\be}{{\boldsymbol{e}}}

\newcommand{\bk}{{\boldsymbol{k}}}

\newcommand{\bp}{{\boldsymbol{p}}}

\newcommand{\bs}{{\boldsymbol{s}}}

\newcommand{\bx}{{\boldsymbol{x}}}

\newcommand{\by}{{\boldsymbol{y}}}

\newcommand{\bone}{{\boldsymbol{1}}}

\newcommand{\brho}{{\boldsymbol{\rho}}}
\newcommand{\bsigma}{{\boldsymbol{\sigma}}}

\newcommand{\balpha}{{\boldsymbol{\alpha}}}

\newcommand{\bbeta}{{\boldsymbol{\beta}}}

\newcommand{\bgamma}{{\boldsymbol{\gamma}}}
\newcommand{\blambda}{{\boldsymbol{\lambda}}}

\newcommand{\rd}{{\rm d}} 
 
\newcommand{\norm}[2]{\left\|{#1}\right\|_{#2}}

\newcommand{\brab}[1]{\left\{#1\right\}}
\newcommand{\brac}[1]{\left(#1\right)}

\title{\sffamily {Sparsity  for parametric PDEs with log-gamma random inputs and applications}}

\author[a]{Dinh D\~ung}
\affil[a]{Information Technology Institute, Vietnam National University, Hanoi
	\protect\\
	144 Xuan Thuy, Cau Giay, Hanoi, Vietnam
	\protect\\
	Email: dinhzung@gmail.com}
	
	\author[b]{Viet Ha Hoang}
\affil[b]{Division of Mathematical Sciences, School of Physical and Mathematical Sciences, Nanyang
	Technological University
		\protect\\ 637371 Singapore
		\protect\\ vhhoang@ntu.edu.sg}
		
\medskip
\author[c]{Van Kien Nguyen}
\affil[c]{Department of Mathematical Analysis, University of Transport and Communications
	\protect\\	No.3 Cau Giay Street, 	Hanoi, Vietnam
	\protect\\
	Email: kiennv@utc.edu.vn}

\date{\today}
\date{\ttfamily  \today}
\begin{document}
\maketitle

\begin{abstract}	
	We propose a  novel method for establishing the sparsity of  the coefficients of the  Laguerre generalized polynomial chaos expansion
	of  solutions to parametric elliptic PDEs  with  log-gamma inputs on $\mathbb{R}_+^\infty$. 
	 The established sparsity is quantified by $\ell_p$-summability and weighted $\ell_2$-summability of the coefficients.	
	 Building on these sparsity results, we derive convergence rates for semi-discrete approximations in the parametric variables. These rates apply to  sparse-grid polynomial interpolations,  extended least-squares approximations and the associated semi-discrete quadrature rules.
		Moreover, a counterpart of our method for  parametric elliptic PDEs with  log-normal inputs yields a significant improvement in the sufficient condition for 
	$\ell_p$-summability when the component functions  in the log-normal representation of the parametric diffusion coefficients have global support, compared with results obtained in prior works.
	
	\medskip
	\noindent
	{\bf Keywords and Phrases}: Uncertainty Quantification; Parametric PDEs with log-gamma random  inputs;  Laguerre generalized polynomial chaos expansion; Sparsity; Sparse-grid polynomial interpolation; Least squares approximation.
	
	\medskip
	\noindent
	{\bf Mathematics Subject Classifications (2020)}: 60H35, 65C30, 65D32, 65N35, 41A25. 	
\end{abstract}

\section{Introduction}
\label{Introduction}
In Computational Uncertainty Quantification, efficient approximation of infinite-dimensional parametric PDEs with random inputs has seen significant progress recently. The field is vast, making a full catalog impractical. For a thorough survey and bibliography, see \cite{Adcock2022,CoDe15a,DNSZ2023,GWZ2014,SG2011}.
The primary driver of convergence rates for numerical integration and interpolation of a parametric solution  (as a mapping from the parametric domain to a Bochner space) is the sparsity of its generalized polynomial chaos (GPC) expansion coefficients. This sparsity is quantified by $\ell_p$-summability or weighted $\ell_2$-summability. The authors of  \cite{HoSc14} used real-variable bootstrapping arguments for investigation the sparsity  of  the coefficients of the Hermite GPC  expansion
of  solutions to parametric PDEs  with log-normal inputs based on the Hermite's  second-order differential equation. Real-variable bootstrapping arguments were also used in 
\cite{BCDC17,BCDM17,BCM17} to study the sparsity   of  the coefficients of the  GPC  expansion
of  solutions to parametric elliptic PDEs  with log-normal and affine inputs.  The papers \cite{BCDC17,BCDM17,BCM17} significantly improved the results on sparsity  in the case when the supports of  the component functions in the presentation of the parametric diffusion coefficient have a finite number of overlaps. However, the real-variable bootstrapping arguments suffer certain technical difficulties in generalizing and extending to some important classes of parametric PDEs (cf. \cite{BCDC17,DNSZ2023}). 
An efficient and widely applicable approach to establish the sparsity is the complex  method which is based on analytic continuation  and complex-variable arguments to bound parametric derivatives and GPC expansion coefficients.   This approach has been developed
 in \cite{ADMHolo,CCS13,CDS10,Zec18T,ZDS19} for ``the compact case", where the random parameters of the random  inputs range in compact subsets of $\RR$.  
Unlike in these references, the work \cite{DNSZ2023} investigated this approach for parametric PDEs  with Gaussian field inputs when  the parameter domain $\RR^\infty$   is not compact and the associated probability measure is the standard tensor-product Gaussian measure, and more general for analytic Hilbert-valued functions on $\RRi$.
The results on sparsity and relevant interpolation and quadrature approximation algorithms in \cite{DNSZ2023}
can be applied  to a wide range of parametric and stochastic PDEs with Gaussian field inputs. 

In the present paper, we develop a new approach in  to study the sparsity  for parametric PDEs  with log-gamma random inputs when  the non-compact parameter domain is $\RRip$. The gamma probability distribution has important applications in various fields, including econometrics, Bayesian statistics, biology and medicine (genomics, phylogenomics, neuroscience, oncology...). 
 Unfortunately, the techniques and arguments of  methods employed in 
\cite{ADMHolo,CCS13,CDS10,DNSZ2023,Zec18T,ZDS19}  which are  based particularly on the Rodrigues formula  for the associated orthonormal polynomials, are not suitable for the log-gamma context. 
 We propose a distinct  method for establishing the sparsity of  the coefficients of the  Laguerre GPC expansion
of  solutions to parametric PDEs  with  log-gamma inputs on $\RRip$. This entails significant modifications of mathematical arguments as
compared to those in \cite{CCS13,CDS10,Zec18T,ZDS19,DNSZ2023}. 

   Consider an important model PDE, the divergence-form diffusion elliptic equation 
\begin{equation} \label{ellip}
	- \dv (a\nabla u)
	\ = \
	f \quad \text{in} \quad D,
	\quad u|_{\partial D} \ = \ 0, 
\end{equation}
for   a bounded  Lipschitz domain $D \subset \RRd$, a right-hand side $f$ and a 
spatially variable scalar diffusion coefficient $a$.
Denote by $V:= H^1_0(D)$ the energy space and $V' = H^{-1}(D)$ the dual space of $V$. Assume that  $f \in V'$ and  $a \in L_\infty(D)$ (in what follows this preliminary assumption always holds without mention). If $a$ satisfies the ellipticity assumption
	\begin{equation} \nonumber
		0<\underset{\bx\in D}{\operatorname{ ess inf}}\,a(\bx) \leq a \leq  \underset{\bx\in D}{\operatorname{ ess sup}}\, a(\bx)<\infty,
	\end{equation}
by the well-known Lax-Milgram lemma, there exists a unique weak 
solution $u \in V$  to the equation~\eqref{ellip}  satisfying the variational equation
\begin{equation} \nonumber
	\int_{D} a(\bx)\nabla u(\bx) \cdot \nabla v(\bx) \, \rd \bx
	\ = \
	\langle f , v \rangle,  \quad \forall v \in V.
\end{equation}
For the equation~\eqref{ellip}, 
we consider  the diffusion coefficients having a parametric form $a=a(\by)$, where $\by=(y_j)_{j \in \NN}$
is a sequence of real-valued parameters ranging in $\RRip$ or $\RRi$.
Denote by $u(\by)$ the weak solution to the 
parametric  diffusion divergence-form elliptic equation 
\begin{equation} \label{SPDE}
	- {\rm div} (a(\by)\nabla u(\by))
	\ = \
	f \quad \text{in} \quad D,
	\quad u(\by)|_{\partial D} \ = \ 0,
\end{equation}	
where the parametric diffusion coefficient $a$  is of the form
\begin{equation} \label{lognormal}
	a(\by)=\exp(b(\by)), \quad {\text{with }}\ b(\by)=\sum_{j = 1}^\infty y_j\psi_j, \ \ \by \in \UUi,
	\ \ \text{and} \ \ \psi_j \in L_\infty(D),
\end{equation}
 and 
\begin{itemize}
	\item[{($\operatorname{\Gamma}$)}]
 $y_j$ are   i.i.d.  gamma random variables on $\UU=\RRp$, or
\item[{\rm (G)}]
 $y_j$ are   i.i.d.  standard Gaussian random variables on $\UU=\RR$.
\end{itemize}
The problem of analyticity and sparsity has been studied in \cite[Section 3.6]{DNSZ2023} for the parametric equation with with log-normal random inputs \eqref{SPDE}--\eqref{lognormal}(G).
 To the knowledge of the authors, so far this problem for the  parametric equation with log-gamma random inputs  \eqref{SPDE}--\eqref{lognormal}($\operatorname{\Gamma}$) has not been considered in any prior works. In the present paper, we focus our attention to this problem for equation \eqref{SPDE}--\eqref{lognormal}($\operatorname{\Gamma}$) and briefly revisit it for  equation \eqref{SPDE}--\eqref{lognormal}(G). 

We shortly describe the main contribution of this paper. 
\begin{itemize}
	\item[{\rm (i)}] We propose a  novel method for establishing the sparsity of  the coefficients of the  Laguerre GPC expansion
	of  solutions $u(\by)$ to the parametric equation with log-gamma random inputs  \eqref{SPDE}--\eqref{lognormal}($\operatorname{\Gamma}$). Unlike  approaches in  previous works which  rely particularly on the Rodrigues formula, our method is based specially  on the   Laguerre's second order differential equation for the Laguerre polynomials (for detailed comments, see Remark~\ref{remark2.1}).
	
	\item[{\rm (ii)}] The established sparsity is quantified by $\ell_p$-summability and weighted $\ell_2$-summability of these coefficients.
	 From the sparsity results, as applications, we establish convergence rates for the semi-discrete parametric-variable approximations of $u(\by)$ by sparse-grid polynomial interpolations and extended least squares approximations, as well as for the associated semi-discrete quadratures.
	
	\item[{\rm (iii)}] 
	A counterpart of our method for the parametric equation with log-normal random inputs   \eqref{SPDE}--\eqref{lognormal}(G)  yields a significant improvement in the sufficient condition for 
	$\ell_p$-summability when the component functions  in the log-normal representation of the parametric diffusion coefficients have global support, compared with results obtained in the earlier works \cite{BCDM17,DNSZ2023,HoSc14} (for detailed comments, see Remark~\ref{remark4.1}).	
\end{itemize}

The remaining part of the present paper is organized as follows. 
In Section~\ref{Analyticity and sparsity  for parametric PDEs}, we  propose a method for establishing the sparsity of  the coefficients of the  Laguerre GPC  expansion of  solutions $u(\by)$ to parametric equation with log-gamma random inputs \eqref{SPDE}--\eqref{lognormal}($\operatorname{\Gamma}$). 
In Section~\ref{Semi-discrete parametric approximations}, we prove convergence rates of 
sparse-grid polynomial interpolations,  extended least squares approximation as well as the generated semi-discrete quadratures for $u(\by)$. 
In Section~\ref{Parametric PDEs with Gaussian log-normal inputs}, we extend the sparsity results in Section 
\ref{Analyticity and sparsity  for parametric PDEs}  to the parametric PDEs with log-normal random inputs
\eqref{SPDE}--\eqref{lognormal}(G).

\medskip
\noindent
{\bf Notation} \  As usual, $\NN$ denotes the natural numbers, $\ZZ$  the integers, $\RR$ the real numbers, $\RRp$ the real non-negative numbers, $\CC$ the complex numbers,
$ \NN_0:= \{s \in \ZZ: s \ge 0 \}$.
Denote by $\RR^\infty$ and $\RRip$ the
sets of all sequences $\by = (y_j)_{j\in \NN}$ with $y_j\in \RR$ and $y_j\in \RRp$, respectively.
Denote by $|\by|_0$  the number of nonzero components $y_j$ of  $\by = (y_j)_{j\in \NN}$.
Denote by $\FF$  the set of all sequences of non-negative integers $\bs=(s_j)_{j \in \NN}$ such that their support $\supp (\bs):= \{j \in \NN: s_j >0\}$ is a finite set.  For 
$\bs, \bk \in \FF$ and $k \in \RR$, 
$|\bs|_1:= \sum_{j \in \NN} s_j$, $|\bs|_\infty:=\max\{s_j, j\in \NN\}$, \ $\bs^k := \prod_{j \in \supp(\bs)}s_j^k$, 
$\bs^\bk := \prod_{j \in \supp(\bs)}s_j^{k_j}$, $\bs!:= \prod_{j \in \NN} s_j!$.
If  $\balpha= (\alpha_j)_{j \in \Jj}$  is a set of positive numbers with  any index set $\Jj$, then we use the notation 
$\balpha^{-1}:= (a_j^{-1})_{j \in \Jj}$. Denote $\balpha^\bbeta:= \big(\alpha_j^{\beta_j}\big)_{j \in \Jj}$ for the sets $\balpha= (a_j)_{j \in \Jj}$ and $\bbeta= (\beta_j)_{j \in \Jj}$.
We use letters $C$  and $K$ to denote general 
positive constants which may take different values, and $C_{\alpha,\beta...}$  and $K_{\alpha,\beta,...}$ constants depending on $\alpha,\beta,...$.
Denote by $|G|$ the cardinality of the set $G$.

\section{Sparsity  for parametric PDEs}
\label{Analyticity and sparsity  for parametric PDEs}



\subsection{The Laguerre GPC expansion}
\label{The Laguerre GPC expansion}
For a fixed number $a > 0$, let the gamma probability measure  (associated with gamma  distribution) $\lambda_a$ on $\RRp$ be defined via the density function
\begin{equation} \label{l_a}
	l_a(y):=\frac{1}{\Gamma(a)} e^{-y}y^{a-1},
\end{equation}
where $\Gamma$ is the gamma function.
Let  $\Big(L_k^{(a-1)}\Big)_{k\in \NN_0}$ be the sequence of Laguerre polynomials on $\RRp$ 
normalized with respect to $\lambda_a$, i.e., 
$
\int_{\RRp} |L_k^{(a-1)}(y)|^2  \rd \lambda_a(y)  =1, \ 
k\in \NN_0.
$
For simplicity, we adopt the following abbreviations:
$L_k:=L_k^{(a-1)}$ and  \ $\lambda:=\lambda_a$.

We next recall a concept of
probability measure $\blambda(\by)$ on $\RRip$ as 
the infinite tensor product of the 
measures $\lambda(y_j)$:
\begin{equation*} \label{lambda(by):=}
	\blambda(\by) 
	:= \ 
	\bigotimes_{j \in \NN} \lambda(y_j) , \quad \by = (y_j)_{j \in \NN} \in \RRip.
\end{equation*}
Let $X$ be a separable Hilbert space. For $0<p<\infty$, denote by $L_p(\RRip,X;\blambda)$ the Bochner space of strongly $\blambda$-measurable mappings $v$ from $\RRip$ to $X$, equipped with the norm
\begin{equation*} \label{|v|_{L_2(RRip,X;lambda)}:=}
	\|v\|_{L_p(\RRip,X;\blambda)}
	:= \
	\left(\int_{\RRip} \|v(\by)\|_X^p \, \rd \blambda(\by) \right)^{1/p}.
\end{equation*}
A function $v \in L_2(\RRip,X;\blambda)$ can be represented by the Laguerre GPC expansion
\begin{equation} \label{GPCexpansion}
	v(\by)=\sum_{\bs\in\FF} v_\bs \,L_\bs(\by), \quad v_\bs \in X,
\end{equation}
with convergence in $L_2(\RRip,X;\blambda)$, where
\begin{equation*}
	L_\bs(\by)=\bigotimes_{j \in \NN}L_{s_j}(y_j),\quad 
	v_\bs:=\int_{\RRip} v(\by)\,L_\bs(\by)\, \rd\blambda (\by), \quad 
	\bs \in \FF.
\end{equation*}
Here $\FF$ is the set of all sequences of non-negative integers $\bs=(s_j)_{j \in \NN}$ such that their support 
$\supp (\bs):= \{j \in \NN: s_j >0\}$ is a finite set.
Notice that $(L_\bs)_{\bs \in \FF}$ is an orthonormal basis of $L_2(\RRip,\CC;\blambda)$. 
Moreover, a strongly $\blambda$-measurable function $v$ on $\RRip$ belongs to   $L_2(\RRip,X;\blambda)$ if and only if $v$ is represented by the series \eqref{GPCexpansion} converging in $L_2(\RRip,X;\blambda)$ and  it holds  the Parseval's identity
\begin{equation} \label{Parseval's identity}
	\|v\|_{L_2(\RRip,X;\blambda)}^2
	\ = \ \sum_{\bs\in\FF} \|v_\bs\|_X^2.
\end{equation}

The the Bochner space  $L_p(\RRi,X;\bgamma)$ and the associated infinite tensor product standard Gaussian measure $\bgamma(\by)$ on $\RRi$ can be defined in an analogous   fashion.

\subsection{Sparsity for the Laguerre GPC expansion}
\label{Sparsity for the Laguerre GPC expansion}

We recall  a result on regularity with respect to parametric variables proven in \cite[Lemma 3.9]{DNSZ2023}, which gives bounds for  partial derivatives   of the weak solution $u(\by)$ to equation  \eqref{SPDE}--\eqref{lognormal}.
 
\begin{lemma}\label{lemma:|partial^{bs}u(by)|_V<}
	Assume that there exist a positive sequence $\brho=(\rho_j)_{j\in \NN}$ and a positive number $\kappa$
	satisfying 
	\begin{equation}\label{<kappa}
		\Bigg\|\sum_{j \in \NN} \rho_j |\psi_j | \Bigg\|_{L_\infty(D)} 
		\leq 
		\kappa < \frac{\pi}{2}\,.
	\end{equation}
	Then we have the following.
		Let $\by \in  \RRi$ with
		$b(\by)\in L_\infty(D)$ and $\bs\in \FF$ such that
		$\supp(\bs)\subseteq \supp(\brho)$.  Then we have
		\begin{equation*}
			\|\partial^{\bs}u(\by)\|_V 
			\leq
			C_0\frac{\bs!}{\brho^\bs}
			\exp\big( \|b(\by)\|_{L_\infty(D)} \big), \ \ \by \in \RRi,
		\end{equation*}
		where $C_0=e^\kappa (\cos\kappa)^{-1}\|f\|_{V'}$.
\end{lemma}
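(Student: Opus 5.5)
We prove the bound by analytic continuation in the parameters followed by the Cauchy integral formula on polydiscs. Fix $\by\in\RRi$ with $b(\by)\in L_\infty(D)$ and $\bs\in\FF$ with $\supp(\bs)\subseteq\supp(\brho)$. Set $J:=\supp(\bs)$, a finite set. Consider the complex perturbation
\[
\bz=(z_j)_{j\in J}\mapsto u(\by+\bz),\qquad a(\by+\bz)=\exp\Big(b(\by)+\sum_{j\in J} z_j\psi_j\Big),
\]
where we add $z_j$ only in the coordinates $j\in J$, and work with the complex-valued version of the variational problem on $V\otimes\CC$.

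\textbf{Step 1 (ellipticity on a complex neighbourhood).} For $|z_j|\le\rho_j$, $j\in J$, write $z_j=x_j+i t_j$. Then
\[
\Re a(\by+\bz)=e^{b(\by)+\sum x_j\psi_j}\cos\Big(\sum t_j\psi_j\Big).
\]
Since $|\sum t_j\psi_j|\le\sum\rho_j|\psi_j|\le\kappa<\pi/2$ by \eqref{<kappa}, we get
\[
\Re a\ge e^{-\|b(\by)\|_{L_\infty}-\kappa}\cos\kappa,\qquad |a|\le e^{\|b(\by)\|_{L_\infty}+\kappa}.
\]
Complex Lax--Milgram then yields a unique solution $u(\by+\bz)\in V$ satisfying
\[
\|u(\by+\bz)\|_V\le \frac{\|f\|_{V'}}{\operatorname{ess\,inf}\Re a}\le \frac{e^\kappa}{\cos\kappa}\,e^{\|b(\by)\|_{L_\infty}}\|f\|_{V'}=C_0\,e^{\|b(\by)\|_{L_\infty}}.
\]
To get exactly this constant, we apply the bound with $\operatorname{ess\,inf}\Re a$ and use $\Re\int a|\nabla w|^2\ge \operatorname{ess\,inf}(\Re a)\,\|w\|_V^2$.

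\textbf{Step 2 (holomorphy).} The map $\bz\mapsto a(\by+\bz)\in L_\infty(D)$ is holomorphic in each variable, being an exponential of an affine function with $L_\infty$ values. Inversion of the uniformly coercive operator $A(\bz):V\to V'$ is holomorphic, since inversion is analytic on invertible operators. Hence $\bz\mapsto u(\by+\bz)$ is holomorphic in each $z_j$ on a neighbourhood of the closed polydisc $\prod_{j\in J}\{|z_j|\le\rho_j\}$. To have room for Cauchy's formula, we use radii $\rho_j$ with a slightly larger $\kappa'<\pi/2$, or simply note that the strict inequality lets us take radii $(1+\varepsilon)\rho_j$ and let $\varepsilon\to0$.

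\textbf{Step 3 (Cauchy estimate).} The real derivative $\partial^\bs u(\by)$ coincides with the complex derivative at $\bz=0$. The Cauchy integral formula on the polydisc gives
\[
\|\partial^\bs u(\by)\|_V\le \frac{\bs!}{\brho^\bs}\sup_{|z_j|=\rho_j}\|u(\by+\bz)\|_V\le C_0\frac{\bs!}{\brho^\bs}e^{\|b(\by)\|_{L_\infty(D)}}.
\]

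The main obstacle is Step 1. The issue is checking that the sup bound over the polytorus is uniform with exactly the constant $C_0$ and is independent of the real parts $x_j$. Those real parts contribute $e^{\pm\sum\rho_j|\psi_j|}$, which is bounded by $e^{\kappa}$. The $\cos\kappa$ factor must come only from the imaginary parts. The dependence on $\|b(\by)\|_{L_\infty}$ then enters only through $e^{-b}$ in the lower bound of $\Re a$, which gives exactly the stated form.
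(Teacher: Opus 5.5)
Your proof is correct and follows the same route as the standard argument for this lemma, which the paper quotes without proof from \cite[Lemma 3.9]{DNSZ2023}. You complexify only the finitely many coordinates in $\supp(\bs)$, bound the real part of $a$ below by $e^{-\|b(\by)\|_{L_\infty(D)}-\kappa}\cos\kappa$, use Lax--Milgram to get the uniform bound $C_0\exp(\|b(\by)\|_{L_\infty(D)})$ on the closed polydisc of radii $\rho_j$, and conclude with Cauchy's formula. One simplification: because $\kappa<\pi/2$ is strict, holomorphy on the slightly larger polydisc of radii $(1+\varepsilon)\rho_j$ already lets you apply Cauchy's formula on the torus of radii $\rho_j$ directly, so the limit $\varepsilon\to 0$ is unnecessary.
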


In what follows, throughout Sections~\ref{Analyticity and sparsity  for parametric PDEs} and \ref{Semi-discrete parametric approximations}, we denote by $u$ or   $u(\by)$  the weak solution  to parametric equation with log-gamma random inputs \eqref{SPDE}--\eqref{lognormal}($\operatorname{\Gamma}$).

If
$b(\by)\in L_\infty(D)$ with $\by \in \RRip$,
we have the estimate
\begin{equation} \label{eq:uApriori} \|u(\by)\|_V \leq \|f\|_{{V'}} \|
	a(\by)^{-1}\|_{L_\infty(D)} \leq \exp\big(\|b(\by)\|_{L_\infty(D)}\big)\|f\|_{{V'}}.
\end{equation}
It follows from the a-priori estimate \eqref{eq:uApriori} that for
$f\in V'$ the parametric elliptic diffusion problem equation \eqref{SPDE}--\eqref{lognormal}{\rm (L)} admits a
unique solution for parameters $\by$ in the set
\begin{equation}\label{U_0}
	U_0 := \big\{ \by \in \RRip: b(\by)\in L_\infty(D)\big\}.
\end{equation}
%

\begin{lemma}\label{lemma:EE}
	Assume that	for every $j\in \NN$, $\psi_j\in L_\infty(D)$, and there
	exists a positive sequence $(\rho_j)_{j\in \NN}$ such that 
	$\big(\exp(-\rho_j)\big)_{j\in \NN}\in \ell_1(\NN)$ and the
	series $\sum_{j\in \NN}\rho_j|\psi_j|$ converges in
	$L_\infty(D)$.
	Then the set $U_0$ has full measure, i.e.,
	$
	\blambda(U_0) = 1,
	$
	and	contains  all $\by \in \RRip$ with $|\by|_0 < \infty$.
	\end{lemma}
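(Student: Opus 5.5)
The plan is to show that $\|b(\by)\|_{L_\infty(D)}$ is finite $\blambda$-almost surely by bounding it with a random series whose expectation is finite. Since $y_j\ge 0$ on $\RRip$, we have pointwise a.e. in $D$
\begin{equation*}
|b(\by)| \le \sum_{j\in\NN} y_j|\psi_j| = \sum_{j\in\NN} \frac{y_j}{\rho_j}\,\rho_j|\psi_j| \le \Big(\sup_{j\in\NN}\frac{y_j}{\rho_j}\Big)\,\Big\|\sum_{j\in\NN}\rho_j|\psi_j|\Big\|_{L_\infty(D)} .
\end{equation*}
The second factor is finite by assumption. It therefore suffices to show that $\sup_j y_j/\rho_j<\infty$ for $\blambda$-a.e. $\by$; on that set the series defining $b(\by)$ also converges absolutely in $L_\infty(D)$. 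An equivalent route is to show that $y_j\le \rho_j$ for all but finitely many $j$, almost surely. This bounds $\sup_j y_j/\rho_j$ by $\max(1,\max_{j\le J}y_j/\rho_j)$, which is finite.

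\textbf{Borel--Cantelli step.} We use the first Borel--Cantelli lemma with the events $A_j=\{y_j>\rho_j\}$. For the gamma law $\lambda=\lambda_a$ we need a tail bound of the form $\lambda((t,\infty))\le C_a e^{-t/2}$ for $t\ge 0$. This follows from
\begin{equation*}
\int_t^\infty e^{-y}y^{a-1}\,\rd y \le e^{-t/2}\int_0^\infty e^{-y/2}y^{a-1}\,\rd y = 2^a\Gamma(a)\,e^{-t/2}.
\end{equation*}
This yields only $e^{-\rho_j/2}$, which is not summable under the hypothesis $(e^{-\rho_j})\in\ell_1$. We fix this by using the events $A_j=\{y_j>2\rho_j\}$ instead: $\lambda(A_j)\le 2^a e^{-\rho_j}$, which is summable. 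Borel--Cantelli then gives that almost surely $y_j\le 2\rho_j$ for all $j\ge J(\by)$. Consequently $\|b(\by)\|_{L_\infty(D)}\le \big(2+\max_{j<J}y_j/\rho_j\big)\|\sum_j\rho_j|\psi_j|\|_{L_\infty(D)}<\infty$, so $\blambda(U_0)=1$.

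\textbf{Measurability.} We must check that $U_0$, or at least the Borel--Cantelli set $\{\by: \limsup_j y_j/\rho_j\le 2\}$ contained in it, is measurable. The latter is defined by countably many coordinate conditions, so it is measurable; hence $U_0$ has full (inner) measure. If $U_0$ itself needs to be measurable, we can replace it by this measurable subset, which is all the later arguments use.

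\textbf{Finitely supported $\by$.} If $|\by|_0<\infty$, then $b(\by)$ is a finite linear combination of the $\psi_j\in L_\infty(D)$, so $b(\by)\in L_\infty(D)$ trivially.

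\textbf{Main obstacle.} The delicate point is matching the tail decay of the gamma law to the hypothesis $(e^{-\rho_j})\in\ell_1$. The factor $y^{a-1}$ prevents the clean bound $e^{-t}$, and thresholding at $2\rho_j$ (or, for $a\le1$, directly at $\rho_j$ with the bound $\lambda((t,\infty))\le e^{-t}/\Gamma(a)$ for $t\ge1$) resolves it.
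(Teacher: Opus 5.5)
Your proof is correct, and it is essentially the gamma-distribution adaptation of the proof of \cite[Theorem 2.2]{BCDM17} that the paper points to: you bound $\|b(\by)\|_{L_\infty(D)}$ by $\sup_j (y_j/\rho_j)\cdot\|\sum_{j}\rho_j|\psi_j|\|_{L_\infty(D)}$, then show this supremum is $\blambda$-a.s.\ finite using a gamma tail bound together with $(e^{-\rho_j})_{j\in\NN}\in\ell_1(\NN)$. Thresholding at $2\rho_j$ correctly absorbs the factor $y^{a-1}$ in the density, and your remarks on measurability (full measure with respect to the completion) and on finitely supported $\by$ settle the remaining points.
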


\begin{proof}The proof of this lemma is similar to the proof of  \cite[Theorem 2.2]{BCDM17}. We omit it.
	\hfill
\end{proof}

It is known that, the Laguerre polynomials $\big(L_k\big)_{k\in \NN_0}$ are nontrivial solutions to
the Laguerre's  differential equation of second order
\begin{equation} \label{Dd1} 
	\Dd u = k u, \quad \text{where} \quad \Dd   := - y  \frac{\rd^2  }{\rd y^2} - (a - y)\frac{\rd  }{\rd y}
\end{equation}
(see, e.g., \cite[5.1.3]{Szego1939}). This means that 
\begin{equation*} \label{bb} 
	\Dd   L_k   = k L_k,  \ \ k \in \NN_0. 
\end{equation*}
Observe that, it holds the equality 
\begin{equation} \label{Dd2} 
	\Dd u = - e^y y^{1 - a} \frac{\rd }{\rd  y} \brac{e^{-y} y^{a} \frac{\rd u}{\rd y}},
\end{equation}
which can be shown by taking differentiation of  the function $e^{-y} y^{a} \frac{\rd u}{\rd y}$.  

For a given finite set $J \subset \NN$ and $r \in\NN$, we define the product differential operator 
 \begin{equation} \label{Dd^r_J} 
 	\Dd^r_J  := \brac{\Dd_J}^r, \quad 
 	\Dd_J  := (-1)^{|J|} \prod_{j \in J} e^{y_j}y_j^{1 - a} \frac{\rd}{\rd y_j} \brac{e^{-y_j} y^{a}_j \frac{\rd}{\rd y_j}}.
 	\end{equation}
Notice that there exist polynomials $p_j(t)$, $j = 1,...,2r$, of degree at most $r$  such that 
\begin{equation*} \label{Dd^r} 
	\Dd^r 
	=
	\brac{ - y  \frac{\rd^2 }{\rd y^2} - (a - y)\frac{\rd }{\rd y}}^r
	= \sum_{j=1}^{2r}	p_j(y) \frac{\rd^j }{\rd y^j},
\end{equation*}
and the coefficients of $p_j(y)$ depend on $r$ and $a$ only.	Hence, there exists a constant $C_{a,r}$ such that for $y \ge 0$,
\begin{equation} \label{p_j} 
|p_j(y)|  \le C_{a,r}(1 + y)^r, \ \ j = 1,\ldots,2r.
\end{equation}

	For a finite set $J \subset \NN$, $r \in \NN$ and $\bs \in \FF$, we define
\begin{equation} \label{nu_J}
\nu_{J,\bs}:= \prod_{j \in J} s_j.
\end{equation}		
\begin{lemma} \label{lemma:Dd^r_J v =Laguerre}
	If $v \in L_2(\RRip,V;\blambda)$ and $\Dd_J^r v \in L_2(\RRip,V;\blambda)$, then we have that
 \begin{equation} \label{identity1} 
\Dd^r_J v = \sum_{\bs \in \FF} \nu_{J,\bs}^r v_\bs L_\bs.
\end{equation}
\end{lemma}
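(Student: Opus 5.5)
The identity will follow by showing that the GPC coefficients of $w:=\Dd^r_J v$ are $w_\bs=\nu_{J,\bs}^r v_\bs$. Since $w\in L_2(\RRip,V;\blambda)$ by hypothesis, the expansion \eqref{GPCexpansion} together with Parseval's identity \eqref{Parseval's identity} then gives \eqref{identity1}, with convergence in $L_2(\RRip,V;\blambda)$. The starting point is the formula $w_\bs=\int_{\RRip} (\Dd^r_J v)(\by) L_\bs(\by)\,\rd\blambda(\by)$. The plan is to move $\Dd_J$ from $v$ onto $L_\bs$ by repeated integration by parts, using the divergence form \eqref{Dd2} of the one-dimensional operator, and then to apply $\Dd L_k=kL_k$.

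First I will treat the one-dimensional self-adjointness. For the density $l_a$ in \eqref{l_a}, the form \eqref{Dd2} gives
\[
\int_{\RRp} (\Dd g)\,h\,\rd\lambda=-\frac{1}{\Gamma(a)}\int_0^\infty \frac{\rd}{\rd y}\Big(e^{-y}y^{a}g'\Big)h\,\rd y .
\]
Integrating by parts twice, this equals $\int_{\RRp} g\,(\Dd h)\,\rd\lambda$, provided the boundary terms $e^{-y}y^a(g'h-gh')$ vanish at $0$ and at $\infty$. For $h=L_k$, a polynomial, the factor $y^a$ (with $a>0$) kills the term at $0$, and $e^{-y}$ kills the term at $\infty$, as long as $g$ and $g'$ have at most polynomial growth at infinity and are bounded near $0$. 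The product operator $\Dd_J$ is a composition of commuting one-dimensional operators, one in each $y_j$, $j\in J$. I will apply the one-dimensional identity variable by variable, using Fubini on the product measure $\blambda$, and then iterate $r$ times. Each application to $L_\bs$ contributes the factor $\prod_{j\in J}s_j=\nu_{J,\bs}$, since $\Dd_J$ carries the sign $(-1)^{|J|}$ and so acts as $\prod_{j\in J}\Dd^{(y_j)}$. In total this yields $w_\bs=\nu_{J,\bs}^r v_\bs$.

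The main obstacle is justifying these integrations by parts for a general $v$ that is only assumed to satisfy $v,\Dd^r_J v\in L_2$. I would handle this weakly: interpret $\Dd^r_J v$ in the distributional sense, so that the identity $\int (\Dd^r_J v)\varphi\,\rd\blambda=\int v\,\Dd^r_J\varphi\,\rd\blambda$ holds for test functions $\varphi$. I would then extend it to $\varphi=L_\bs$ by a cutoff argument. Take smooth cutoffs $\chi_n(y_j)$ supported in $[1/n,n]$ and apply the identity to $\varphi_n=\chi_n L_\bs$. By \eqref{p_j}, the terms in which derivatives fall on $\chi_n$ are controlled by polynomial weights times $|v|$. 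These terms tend to $0$ by Cauchy--Schwarz, because $v\in L_2(\blambda)$ and polynomials lie in $L_2(\lambda)$. The dominated convergence theorem then passes the limit, giving $\int (\Dd^r_J v)L_\bs\,\rd\blambda=\int v\,\Dd^r_J L_\bs\,\rd\blambda=\nu_{J,\bs}^r v_\bs$. If the paper intends $\Dd_J^r v$ classically, as in the applications where $v=u$ is smooth with the derivative bounds of Lemma~\ref{lemma:|partial^{bs}u(by)|_V<}, the same computation applies with the boundary terms vanishing directly. The most delicate part of the argument is the cutoff near $y_j=0$, where the weight $y^{a-1}$ may be singular if $a<1$.
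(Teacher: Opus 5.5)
Your core computation is exactly the paper's proof: move $\Dd$ onto $L_{s_j}$ by two integrations by parts using the divergence form \eqref{Dd2}, apply $\Dd L_k=kL_k$, then tensorize with Fubini and iterate. The paper does this formally and simply drops the boundary terms.

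The genuine problem is the extra step you propose for a general $v$, with $\Dd^r_J v$ understood distributionally. Near $y_j=0$, the terms where derivatives fall on the cutoff are \emph{not} controlled by polynomial weights. With $\chi_n(y)=\chi(ny)$ there, the factors $y\chi_n''$ and $a\chi_n'$ have size $n\sim 1/y$ on $[1/n,2/n]$, a set of $\lambda$-measure $\asymp n^{-a}$. Cauchy--Schwarz therefore only gives $o(n^{1-a/2})$, which need not vanish for any $a<2$; the issue is not the singularity of $y^{a-1}$ when $a<1$.

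No sharper estimate can repair this, because in that generality the statement is false for $0<a<2$ (the endpoint $0$ is of limit-circle type). Take $J=\{1\}$, $r=1$, and $v(\by)=y_1^{1-a}\phi(y_1)\,w$, where $0\neq w\in V$ and $\phi$ is smooth, compactly supported, with $\phi=1$ near $0$; use $\log y_1$ instead if $a=1$. Then $v\in L_2$, and $\Dd v=(1-a)y_1^{1-a}w$ near $0$, so $\Dd v\in L_2$. Nevertheless
\[
(\Dd v)_{\boldsymbol{0}}=\int_{\RRp}\Dd v\,\rd\lambda=\frac{1}{\Gamma(a)}\lim_{y\to0^+}e^{-y}y^{a}v'(y)=\frac{1-a}{\Gamma(a)}\,w\neq 0=\nu_{J,\boldsymbol{0}}\,v_{\boldsymbol{0}} .
\]

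So the lemma needs hypotheses that kill the boundary terms $e^{-y}y^{a}(g'L_s-gL_s')$ at both ends. For example, require $v$ to be $C^{2r}$ in each $y_j$, $j\in J$, up to $y_j=0$ with these derivatives bounded near $0$, together with a growth condition at infinity.

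Your fallback, the classical computation for $v=u$, is the route that works, and it is what the theorems actually use. At $0$ the boundary terms vanish because of the factor $y_j^{a}$. At infinity, however, you should state the growth correctly. Lemma~\ref{lemma:|partial^{bs}u(by)|_V<} gives $\|\partial^{\bk}u(\by)\|_V\le C\exp(\|b(\by)\|_{L_\infty(D)})\le C\exp(\sum_i b_iy_i)$, which is exponential in $y_j$, not polynomial. The boundary term at $\infty$ vanishes only because $e^{-y_j}$ dominates $e^{b_jy_j}$. This does hold here, since finiteness of $A_r(J)$ already forces $b_j=\norm{\psi_j}{L_\infty(D)}<1/2$, as also assumed in Lemma~\ref{lemma:A_r(J)}. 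Finally, the Fubini step needs the integrability of $\prod_{j\in J}(1+y_j)^r\exp(\|b(\by)\|_{L_\infty(D)})$ encoded in $A_r(J)<\infty$.
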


\begin{proof}
We first prove that  if $v \in L_2(\RR_+,V;\lambda)$ and $\Dd v \in L_2(\RR_+,V;\lambda)$, then we have that
\begin{equation} \label{identity2} 
	\Dd v = \sum_{s\in \NN_0} s v_s L_s.
\end{equation}
Indeed, by  \eqref{Dd1}, \eqref{Dd2} and integration by parts twice, we have that,
\begin{equation*}
	\begin{aligned}
		s  v_s &:= \ \int_{\RRp} v(y)\, \brac{sL_s(y)}\, \rd\lambda(y)
		=  \, 	\int_{\RRp}v(y) \Dd L_s(y) \,\rd y
	\\
	&	=  \, -	\int_{\RRp}v(y)  \frac{\rd }{\rd y} \brac{e^{-y} y^{a} \frac{\rd L_s(y)}{\rd y}}\,\rd y		 
	 =  \, \int_{\RRp} \frac{\rd v(y) }{\rd y} e^{-y} y^{a} \frac{\rd L_s(y)}{\rd y}\,\rd y \\[1ex]
		&=  \ 	- \int_{\RRp}  \frac{\rd }{\rd y} \brac{e^{-y} y^{a} \frac{\rd v(y)}{\rd y}} L_s(y) \,\rd y 
		 =  \ 	\int_{\RRp}e^{-y}y^{a-1}\Dd v(y) L_s(y)\,\rd y\\[1ex]	
		&=  \ 	\int_{\RRp} \Dd v(y) L_s(y)\,\rd \lambda(y), \\[1ex]					
		\end{aligned}
\end{equation*}
which implies \eqref{identity2}.  By using  the tensor product argument, \eqref{identity2}, definition \eqref{Dd^r_J}  one can easily deduce \eqref{identity1}.
\hfill
\end{proof}

For $\theta, \lambda \ge 0$, we define the set 
$\bp(\theta, \lambda):= \brac{p_\bs(\theta, \lambda) }_{\bs \in \FF}$ by 
\begin{equation*} \label{[p_s]}
	p_\bs(\theta, \lambda) := \prod_{j \in \NN} (1 + \lambda s_j)^\theta, \quad \bs \in \FF.
\end{equation*}

We often use the following inequality for fixed $\theta$ and $\lambda$
	\begin{equation}\label{ineq-theta,lambda}
		(1+ \lambda k)^{\theta} \le C_{\theta,\lambda} k^{\theta}, \ \  k \in \NN,
	\end{equation}
with some constant $C_{\theta,\lambda}$	depending on $\theta,\lambda$ only. For fixed $0<p<\infty$ and $\theta \ge 0$, let  $r_{p,\theta} \in \NN$ be chosen such that 
	\begin{equation}\label{C_{p,theta}}
p(r_{p,\theta}-\theta)>1, \ \ \ \text{and \ denote} \ \ \ 
C_{p,\theta}:= \sum_{k \in \NN} k^{-p(r_{p,\theta}-\theta)}.
\end{equation}
	For a finite subset $J$ of $\NN$ and $r \in \NN$, we define 
\begin{equation}\label{A_r(J):=}
	A_r(J):= \Bigg(\int_{\RRip}\prod_{j \in J} (1 + y_j)^{2r} 
	\exp\Big(2 \|b(\by)\|_{L_\infty(D)}\Big) \rd \blambda(\by)\Bigg)^{1/2}.
\end{equation}

	\begin{theorem} \label{thm: summability-ell_p-rho}
		Let  $0< p < \infty$, $\brho=(\rho_j)_{j\in \NN}$ be a positive sequence satisfying  condition~\eqref{<kappa}, and $\brho^{-1}\in \ell_p(\NN)$. 
	Assume that  for any $r\in \NN$, there exist a constant $K_r$ such that
	\begin{equation}\label{A_r(J)<}
		A_r(J) \le K_r^{|J|}
	\end{equation}
for any  finite set $J\subset \NN$.	 
	Then $\brac{\norm{u_\bs}{V}}_{\bs\in \FF}\in \ell_p(\FF)$.	
	
	Moreover, if in addition, $p < 2$, for any fixed $\theta, \lambda \ge 0$,  we can construct a set 
	$\bsigma=(\sigma_\bs)_{\bs \in \FF}$  with positive $\sigma_\bs$, and a constant $M$ such that	
	\begin{equation} \label{ell_2-summability-rho}
		\left(\sum_{\bs\in\FF} (\sigma_\bs \|u_\bs\|_V)^2\right)^{1/2} \ \le M^{1/2} \ <\infty, \ \ \text{with} \ \
		\norm{\bp(\theta,\lambda)\bsigma^{-1}}{\ell_q(\FF)} \le M^{1/q} < \infty,
	\end{equation}	
	where $q := 2p/(2-p)$.  The set $\bsigma$ and constant $M$ depend on $a,  p, \theta, \lambda, \kappa$ only. 
	\end{theorem}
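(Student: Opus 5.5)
Fix $\bs\in\FF$ with $J:=\supp(\bs)$. The plan is to apply Lemma~\ref{lemma:Dd^r_J v =Laguerre} to $v=u$, read off a bound on $\|u_\bs\|_V$ from the Parseval identity \eqref{Parseval's identity}, and then sum over $\bs$.

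\textbf{Step 1: weighted bound.} Expanding $\Dd_J^r=\prod_{j\in J}\Dd^r$ in the variables $y_j$ gives, via \eqref{p_j},
\[
\Dd^r_J u=\sum_{\boldsymbol{\alpha}}\Big(\prod_{j\in J}p_{\alpha_j}(y_j)\Big)\partial^{\boldsymbol\alpha}u,
\qquad 1\le\alpha_j\le 2r,\ \ \supp(\boldsymbol\alpha)=J .
\]
There are at most $(2r)^{|J|}$ terms. Lemma~\ref{lemma:|partial^{bs}u(by)|_V<} bounds each derivative by $C_0\,\boldsymbol\alpha!\,\brho^{-\boldsymbol\alpha}\exp(\|b(\by)\|_{L_\infty})$. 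Combining this with \eqref{p_j} and \eqref{A_r(J)<} yields
\[
\|\Dd^r_J u\|_{L_2(\RRip,V;\blambda)}\le C_0\prod_{j\in J}\Big(2r\,C_{a,r}K_r\,(2r)!\max(1,\rho_j^{-2r})\Big)=:C_0\prod_{j\in J}B_r(\rho_j).
\]
In particular $\Dd_J^r u\in L_2$. Before invoking Lemma~\ref{lemma:Dd^r_J v =Laguerre} one should check that $u$ is regular enough for the integrations by parts. This holds because $u$ is smooth in each $y_j$, and the boundary terms at $0$ and $\infty$ vanish due to $y^a e^{-y}$ together with the moment bound $A_r$; I would mention this but not belabor it. Lemma~\ref{lemma:Dd^r_J v =Laguerre} and Parseval then give
\[
\sum_{\bs':\,\supp(\bs')\supseteq J}\ \prod_{j\in J}(s'_j)^{2r}\,\|u_{\bs'}\|_V^2\le C_0^2\prod_{j\in J}B_r(\rho_j)^2 .
\]
Since $\rho_j\to\infty$, we have $\max(1,\rho_j^{-2r})\le C$, and the factor $\rho_j^{-2r}$ is too weak for summability. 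This is the main obstacle. To gain decay in $\rho_j$, I would rescale: replace $\brho$ by $\tau\brho$ on a finite set only. More precisely, I would use a $\bs$-dependent choice of the form $\rho_j\to\rho_j$ for $j\in J$ together with the factor $\rho_j^{-s_j}$ that appears naturally once $\alpha_j\ge 1$, i.e.\ keep the bound $\prod_{j\in J}C_r\rho_j^{-1}$, which holds when $\rho_j\ge1$. Then Step 2 goes through with $\rho_j^{-1}\in\ell_p$. If more decay is needed, I would split $J$ into indices with $\rho_j\le1$ (finitely many, absorbed into a constant) and indices with $\rho_j>1$.

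\textbf{Step 2: $\ell_p$.} For a single $\bs$ we get $\|u_\bs\|_V\le C_0\,\bs^{-r}\prod_{j\in J}C_r\rho_j^{-1}$. Take $r=r_{p,0}$ and sum over $\bs$ grouped by support:
\[
\sum_\bs\|u_\bs\|_V^p\le C_0^p\sum_{J}\prod_{j\in J}\big(C_r^pC_{p,0}\,\rho_j^{-p}\big)=C_0^p\prod_{j}\big(1+C_r^pC_{p,0}\rho_j^{-p}\big)<\infty,
\]
which is finite since $\brho^{-1}\in\ell_p$. This Step uses the single-$\bs$ pointwise bound rather than the weighted sum.

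\textbf{Step 3: weighted $\ell_2$.} Take $r=r_{p,\theta}$ and define
\[
\sigma_\bs:=\prod_{j\in J}s_j^{r}\,\big(C_r\rho_j^{-1}\big)^{-(2-p)/2}\cdot w_J ,
\]
with $w_J$ chosen to distribute the sum of the inequality from Step 1 over supports. Concretely, sum the Step 1 inequality over all $J$, exactly as in Step 2, and take $\sigma_\bs^2=\bs^{2r}\prod_{j\in J}(C_r\rho_j^{-1})^{-(2-p)}$. Then $\sum_\bs\sigma_\bs^2\|u_\bs\|_V^2\le C_0^2\prod_j\big(1+C\rho_j^{-p}\big)=:M$. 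Using \eqref{ineq-theta,lambda}, we also get
\[
\sum_\bs\big(p_\bs\sigma_\bs^{-1}\big)^q\le\prod_j\Big(1+C\sum_k k^{-q(r-\theta)}\rho_j^{-q(2-p)/2}\Big).
\]
Here $q(2-p)/2=p$, so this product is finite. After enlarging constants and ensuring $q(r-\theta)>1$ via the choice of $r$, we obtain \eqref{ell_2-summability-rho} with $M$ the maximum of the two products.
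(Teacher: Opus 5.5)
Your proposal is correct and follows essentially the paper's argument: expand $\Dd_J^r u$ into mixed derivatives $\partial^{\boldsymbol\alpha}u$ with $1\le\alpha_j\le 2r$ (there is no zeroth-order term), bound them via Lemma~\ref{lemma:|partial^{bs}u(by)|_V<} and the hypothesis $A_r(J)\le K_r^{|J|}$, obtain $\|u_\bs\|_V\le C_0\,\bs^{-r}\prod_{j\in J}C_r\max(\rho_j^{-1},\rho_j^{-2r})$ from Lemma~\ref{lemma:Dd^r_J v =Laguerre} and Parseval, and sum over supports using $\sum_J\prod_{j\in J}x_j=\prod_j(1+x_j)$. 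The ``obstacle'' in Step 1 comes only from your lossy bound $\max(1,\rho_j^{-2r})$ and the rescaling remarks can be dropped, since $\alpha_j\ge1$ gives $\rho_j^{-\alpha_j}\le\max(\rho_j^{-1},\rho_j^{-2r})$ directly, as in the paper's factor $\sum_{\ell=1}^{2r}\rho_j^{-\ell}$; your Step 3 weight $\sigma_\bs=\bs^{r}\prod_{j\in J}(C_r\rho_j^{-1})^{-(2-p)/2}$, built from the per-support Parseval sum rather than the paper's $\sigma_\bs=\beta_\bs^{p/2-1}$ built from the pointwise bound, is an equally valid minor variant.
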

	
		\begin{proof} 
		We have by  Lemma~\ref{lemma:Dd^r_J v =Laguerre} and the Parseval's  identity \eqref{Parseval's identity}
		\begin{equation} \label{P-identity} 
			\int_{\RRip}\norm{\Dd^r_J u(\by)}{V}^2 \rd \blambda(\by) 
			=\sum_{\bs \in \FF} \nu_{J,\bs}^{2r}\norm{u_\bs}{V}^2.
		\end{equation}		
From \eqref{p_j}, we get for $\by \in \RRip$,	
		\begin{equation*} 
			\begin{aligned}
			\norm{\Dd^r_J u(\by)}{V}
			&=
			\norm{\prod_{j \in J}\brac{ - y_j  \frac{\rd^2 }{\rd y_j^2} - (a - y_j)\frac{\rd }{\rd y_j}}^r u(\by)}{V}
			\\
			&\le 
		C_{a,r}^{|J|} \prod_{j \in J} (1 + y_j)^r 
			\sum_{\supp (\bk) = J, \ |\bk|_\infty \le 2r} \norm{\partial^\bk u(\by)}{V},
		\end{aligned}
		\end{equation*}
		where $C_{a,r}$ is the constant as in \eqref{p_j}.
		Applying Lemma \ref{lemma:|partial^{bs}u(by)|_V<} gives
		\begin{equation*} 
			\norm{\Dd^r_J u(\by)}{V}
			\le 
		C_{a,r}^{|J|} \prod_{j \in J} (1 + y_j)^r
			\sum_{\supp (\bk) = J, \ |\bk|_\infty \le 2r} C_0\frac{\bk!}{\brho^\bk}
			\exp\big( \|b(\by)\|_{L_\infty(D)} \big).
		\end{equation*}
		Hence,	
		\begin{equation*}
			\int_{\RRip}\norm{\Dd^r_J u(\by)}{V}^2 \rd \blambda(\by) 
			\le 
		A_r(J)^2	C_0^2 	C_{a,r}^{2|J|} 
			\Bigg( \sum_{\supp (\bk) = J, \ |\bk|_\infty \le 2r} \frac{\bk!}{\brho^\bk}\Bigg)^2.
		\end{equation*}
Moreover, $\bk! \le ((2r)!)^{|J|}$ when  $\supp (\bk) = J, \ |\bk|_\infty \le 2r$.
		Therefore, for $\bs \in \FF$ with $\supp (\bs)=J$, we have $\nu_{J,\bs}^{2r} = \bs^{2r}$ and by \eqref{P-identity} and 
		\eqref{A_r(J)<}, 
\begin{equation} \label{norm{u_bs}{V}}
\begin{aligned}
				\norm{u_\bs}{V}	
&	\le 
	\bs^{-r}	\Bigg(\int_{\RRip}\norm{\Dd^r_J u(\by)}{V}^2 \rd \blambda(\by) \Bigg)^{1/2}
	\\
	&
	\le 
	\bs^{-r} A_r(J)	C_0 	C_{a,r}^{|J|} ((2r)!)^{|J|}
	\sum_{\supp (\bk) = \supp(\bs), \ |\bk|_\infty \le 2r} \brho^{-\bk}
	\\
&
\le 
C_0 C_1^{|\supp(\bs)|}\bs^{-r}
\sum_{\supp (\bk) = \supp(\bs), \ |\bk|_\infty \le 2r} \brho^{-\bk}=: \beta_\bs,
\end{aligned}
\end{equation}
where $C_1:= K_rC_{a,r}(2r)!$ and $K_r$ is as in \eqref{A_r(J)<}.

Let $\bbeta=(\beta_\bs)_{\bs \in \FF}$. In the following, for any $\theta',\lambda\geq 0$ we will prove that there exists $r \in \NN$ depending on $\theta'$ and $p$ only and a constant $M$ depending on 
$a,  p, \theta', \lambda,\kappa$ only such that
\begin{equation} \label{norm1}
\norm{\bp(\theta',\lambda)\bbeta}{\ell_p(\FF)}
\ \le \
M^{1/p}.
\end{equation}
By \eqref{ineq-theta,lambda}  we have
$$
p_\bs(\theta',\lambda) \beta_\bs 	\le 
C_0\bs^{-r+\theta'} (C_{\theta',\lambda} C_1)^{|\supp(\bs)|}
\sum_{\supp (\bk) = \supp(\bs), \ |\bk|_\infty \le 2r} \brho^{-\bk},
$$
where $C_{\theta',\lambda}$	 is as in 	\eqref{ineq-theta,lambda}. 
Hence,
		\begin{equation} \label{sum2}
			\sum_{\supp (\bs) = J}	\big(p_\bs(\theta',\lambda) \beta_\bs\big)^p	
			\leq 
			C_0(C_{\theta',\lambda} C_1)^{p|J|}	\sum_{\supp (\bs) = J} \bs^{-p(r-\theta')} 
			\brac{\sum_{\supp (\bk) = J, \ |\bk|_\infty \le 2r} \brho^{-\bk}}^p.
		\end{equation}
 Let  $r= r_{p,\theta'}$ be chosen and $C_{p,\theta'}$ be defined as in \eqref{C_{p,theta}}.
		Then we have 
		\begin{equation} \label{sum2}
		\sum_{\supp (\bs) = J} \bs^{-p(r-\theta')} 
			=	
			\prod_{j \in J}	\sum_{s_j \in \NN} s_j^{-p(r-\theta')} 
			\le
			C_{p,\theta'}^{|J|}.
			\end{equation}	
		Letting $|J| := m$ and $J:= \brab{j_1,\ldots,j_m}$, we get 
		\begin{equation*}
			\brac{\sum_{\supp (\bk) = J, \ |\bk|_\infty \le 2r} \brho^{-\bk}}^p		
			= 
			\prod_{i=1}^m \brac{\sum_{\ell=1}^{2r}\brac{\rho_{j_i}}^\ell}^p.		
		\end{equation*}
Hence,
			\begin{equation*}
				\sum_{\supp (\bs) = J}	\big(p_\bs(\theta',\lambda) \beta_\bs\big)^p	
				\le 
				C_0(C_{\theta',\lambda} C_1C_{p,\theta'})^m	\prod_{i=1}^m \brac{{\sum_{\ell=1}^{2r}}\brac{\rho_{j_i}}^\ell}^p
				=	
				C_0\prod_{i=1}^m K \brac{\sum_{\ell=1}^{2r}\brac{\rho_{j_i}}^\ell}^p,
			\end{equation*}
			where 
			$$
			K=C_{\theta',\lambda}K_rC_{a,r}C_{p,\theta'}(2r)!.
			$$ It follows that
			\begin{equation*}
				\begin{aligned}
				\sum_{\bs \in \FF}	\big(p_\bs(\theta',\lambda) \beta_\bs\big)^p	
				&
						\le C_0\sum_{m=1}^{\infty}\sum_{j_1,\ldots,j_m=1}^\infty 	
						\prod_{i=1}^m K \brac{\sum_{\ell=1}^{2r}\brac{\rho_{j_i}}^\ell}^p	
					\\
					& =	C_0\prod_{k=1}^\infty 
					\Bigg(1+K \brac{\sum_{\ell=1}^{2r}\brac{\rho_{j_i}}^\ell}^p	\Bigg)
					\\
					&	\leq C_0
						\exp\Bigg(K\sum_{k=1}^\infty  
						\brac{\sum_{\ell=1}^{2r}\brac{\rho_{j_i}}^\ell}^p	\Bigg),
				\end{aligned}
			\end{equation*}
			which is finite since $\brho^{-1} \in \ell_p(\NN)$. This proves \eqref{norm1}.
		Hence, from \eqref{norm{u_bs}{V}} we deduce that $\brac{\norm{u_\bs}{V}}_{\bs \in \FF}\in \ell_p(\FF)$ by choosing $\theta'=0$.

We now prove \eqref{ell_2-summability-rho}. If $0< p <2$, we define the set 
$\bsigma=(\sigma_\bs)_{\bs \in \FF}$ by
\begin{equation*} 
	\sigma_\bs:= 	\beta_\bs^{p/2 - 1},  \ \bs \in \FF. 
\end{equation*}	
Then we have
\begin{equation*}  \label{sum_ (bsigma^bs...e}
	\sum_{\bs\in\FF} (\sigma_\bs \|u_\bs\|_V)^2
	\ \le \
	\sum_{\bs\in\FF} 
	\brac{\beta_\bs^{p/2 - 1}\beta_\bs}^2
	\ = \
	\norm{\bbeta}{\ell_p(\FF)}^p 		
	\ \le \ M,
\end{equation*}	
and by choosing $\theta'=\theta q/p$,
\begin{equation*} 
	\big\|\bp(\theta,\lambda)\bsigma^{-1}\|_{\ell_q(\FF)}^q
	\ = \ 
	\sum_{\bs\in\FF} 
	\brac{\beta_\bs^{1- p/2}}^{2p/(2-p)} p_\bs(\theta,\lambda)^q
	\ = \
	\norm{\bp(\theta',\lambda)\bbeta}{\ell_p(\FF)}^p 
	\ \le \
	M,
\end{equation*}	
which proves \eqref{ell_2-summability-rho}.		
		\hfill
	\end{proof}
	
		\begin{lemma} \label{lemma:A_r(J)}
		Let  $\bb = \brac{b_j}_{j \in \NN}$ be defined by 
		$b_j:= \norm{\psi_j}{L_\infty(D)}$. Assume 
		$\bb\in \ell_1(\NN)$ and
		\begin{equation*}\label{|bb|_infty}
			\norm{\bb}{\ell_\infty(\NN)} = b_0 <\frac{1}{2}.
		\end{equation*}
		Then 
	\begin{equation}\label{A_r(J)<(2)}
		A_r(J) \le \exp \brac{\frac{a}{1-2b_0} \norm{\bb}{\ell_1(\NN)}}K_{a,r,\bb}^{|J|},
	\end{equation}
	where
	\begin{equation} \label{K_{a,r,bb}}
	K_{a,r,\bb}:= \brac{\int_{\RR_+}  
		\frac{(1+y)^{2r} y^{a-1}}{\Gamma(a)} \exp\big(y(2b_0-1)\big) \rd y}^{1/2} <\infty.
\end{equation}
	\end{lemma}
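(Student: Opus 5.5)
We bound the integrand in \eqref{A_r(J):=} pointwise and then exploit the product structure of $\blambda$. Since $y_j\ge 0$, we have $\|b(\by)\|_{L_\infty(D)} \le \sum_{j\in\NN} y_j b_j$, hence
\begin{equation*}
\prod_{j\in J}(1+y_j)^{2r}\exp\big(2\|b(\by)\|_{L_\infty(D)}\big) \le \prod_{j\in J}(1+y_j)^{2r}e^{2b_jy_j}\prod_{j\notin J} e^{2b_jy_j}.
\end{equation*}
For each finite truncation the integral over the product measure factorizes. By monotone convergence (the partial products increase), the full integral is bounded by
\begin{equation*}
A_r(J)^2 \le \prod_{j\in J}\int_{\RR_+}(1+y)^{2r}e^{2b_jy}\rd\lambda(y)\;\prod_{j\notin J}\int_{\RR_+}e^{2b_jy}\rd\lambda(y).
\end{equation*}

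For $j\in J$ we use $b_j\le b_0$, which gives $e^{2b_jy}\le e^{2b_0y}$. Combined with the density \eqref{l_a}, each such factor is at most $K_{a,r,\bb}^2$ as defined in \eqref{K_{a,r,bb}}. This quantity is finite because $2b_0-1<0$, so the integrand is a polynomial times $y^{a-1}$ times a decaying exponential, which is integrable for $a>0$.

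For $j\notin J$ we compute the moment generating function of the gamma law explicitly: $\int_{\RR_+}e^{ty}\rd\lambda(y)=(1-t)^{-a}$ for $t<1$. With $t=2b_j\le 2b_0<1$ this yields $(1-2b_j)^{-a}=\exp\big(-a\log(1-2b_j)\big)$. The key elementary estimate is $-\log(1-x)\le x/(1-x)$ for $0\le x<1$, which follows from $\log(1+u)\le u$ with $u=x/(1-x)$. Applying it with $x=2b_j$ and using $1-2b_j\ge 1-2b_0$ gives
\begin{equation*}
(1-2b_j)^{-a}\le \exp\Big(\frac{2ab_j}{1-2b_0}\Big).
\end{equation*}
The product over $j\notin J$ is therefore at most $\exp\big(\tfrac{2a}{1-2b_0}\|\bb\|_{\ell_1(\NN)}\big)$. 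This is finite precisely because $\bb\in\ell_1(\NN)$, which is also what makes the infinite product legitimate. Taking square roots yields \eqref{A_r(J)<(2)}, including the stated constant $\tfrac{a}{1-2b_0}\|\bb\|_{\ell_1(\NN)}$.

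The only delicate point is rigor in passing to the infinite product: measurability of the integrand and the interchange of integral and product. Both are handled by monotone convergence applied to the increasing truncations $\sum_{j\le N}y_jb_j$, together with the tensor-product structure of $\blambda$. The remaining steps are direct computations.
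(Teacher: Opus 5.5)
Your proposal is correct and follows essentially the same route as the paper. You bound $\|b(\by)\|_{L_\infty(D)}$ by $\sum_j b_j y_j$ and factorize over the product measure, use $b_j\le b_0$ on $J$, evaluate the gamma moment generating function $(1-2b_j)^{-a}$ off $J$ (the paper gets this via the substitution $\xi_j=(1-2b_j)y_j$), and apply $\log(1+u)\le u$ with $u=2b_j/(1-2b_j)$. Your monotone-convergence justification for exchanging the integral with the infinite product is a welcome detail that the paper leaves implicit.
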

	
	\begin{proof} 
			For any finite set $J\subset \NN$ we have
		\begin{equation*}
			\begin{aligned}
				A_r(J)^2&:=		
				\int_{\RRip}\prod_{j \in J} (1 + y_j)^{2r} 
				\exp\Big(2 \|b(\by)\|_{L_\infty(D)}\Big) \rd \blambda(\by)
				\\
				&\leq  \int_{\RRip}\prod_{j \in J} (1 + y_j)^{2r} 
				\exp\Big(2 \sum_{j\in \NN}b_jy_j\Big) \prod_{j\in \NN}\frac{1}{\Gamma(a)}y_j^{a-1} \exp(-y_j) \rd y_j
				\\
				&\leq \prod_{j\in J} \int_{\RR_+}  \frac{(1+y_j)^{2r} y^{a-1}}{\Gamma(a)} \exp\big(y_j(2b_0-1)\big) \rd y_j \prod_{j\not\in J} \int_{\RR_+}  \frac{y^{a-1}}{\Gamma(a)} \exp\big(y_j(2b_j-1)\big) \rd y_j.
			\end{aligned}
		\end{equation*}
		By changing variable $y_j(1-2b_j)=\xi_j$ and by
		we get
		$$
		A_r(J)^2\leq \brac{K_{a,r,\bb}^{|J|}}^2 \prod_{j\not \in J} \frac{1}{(1-2b_j)^{a}}.
		$$
		We have
		\begin{align*}
			\log\bigg(\prod_{j\not \in J} \frac{1}{(1-2b_j)^{a}}\bigg) 
			&=a\sum_{j\not \in J} \log\frac{1}{1-2b_j} 
			= a\sum_{j\not \in J}  \log \bigg(1+\frac{2b_j}{1-2b_j}\bigg) 
			\\
			&\leq a\sum_{j\not \in J} \frac{2b_j}{1-2b_0} 
			\leq \frac{2a}{1-2b_0} \norm{\bb}{\ell_1(\NN)}.
		\end{align*}
The bound \eqref{A_r(J)<(2)} has been proven.	
	\hfill	
	\end{proof}	
	
	\begin{theorem} \label{thm: summability-ell_p-b-L}
			Let $0< p \le 1$ and  $\bb = \brac{b_j}_{j \in \NN}$ be defined by	$b_j:= \norm{\psi_j}{L_\infty(D)}$.  Let $r=r_{p,0}\in \NN$ be a fixed number satisfying \eqref{C_{p,theta}}.
 Assume that
	$\bb\in \ell_p(\NN)$,
\begin{equation*}\label{eq:max<1/2-1}
	\norm{\bb}{\ell_\infty(\NN)} = b_0 <\frac{1}{2},
\end{equation*}
and
\begin{equation} \label{norm2} \
\norm{\bb}{\ell_1(\NN)}	< K^{-1},
\end{equation}
where 
	\begin{equation*}  
K:= e (2r)!C_{a,r}C_{p,0}K_{a,r,\bb},
\end{equation*}
	and the constants $C_{a,r}$,  $C_{p,0}$,  $K_{a,r,\bb}$, 
	are defined as in \eqref{p_j},   \eqref{C_{p,theta}}, \eqref{K_{a,r,bb}}, respectively. 	 
	Then $\brac{\norm{u_\bs}{V}}_{\bs \in \FF}\in \ell_p(\FF)$. 	
	
	Moreover, for any fixed $\theta, \lambda \ge 0$, if  $q := 2p/(2-p)$,  $\theta'=2\theta /(2-p)$, $r= r_{p,\theta'}$ is a fixed number \eqref{C_{p,theta}} and $K$ in \eqref{norm2} is replaced by 
		\begin{equation} \label{K}
		K:= e(2r)!C_{a,r}C_{p,\theta'}C_{\theta',\lambda}K_{a,r,\bb}
	\end{equation}
with $C_{\theta',\lambda}$ being defined as in \eqref{ineq-theta,lambda}, then	we can construct a set 
	$\bsigma=(\sigma_\bs)_{\bs \in \FF}$, and  a constant $M$  such that	
	\begin{equation} \label{ell_2-summability-L}
		\left(\sum_{\bs\in\FF} (\sigma_\bs \|u_\bs\|_V)^2\right)^{1/2} \ \le M^{1/2} \ <\infty, \ \ \text{with} \ \
		\norm{\bp(\theta,\lambda)\bsigma^{-1}}{\ell_q(\FF)} \le M^{1/q} < \infty.
	\end{equation}	
	The set $\bsigma$ and constant $M$ depend on $a, \bb, p, \theta, \lambda$ only. 
\end{theorem}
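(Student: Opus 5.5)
The plan is to rerun the proof of Theorem~\ref{thm: summability-ell_p-rho}, replacing the fixed weight sequence $\brho$ by one that is allowed to depend on the support $J=\supp(\bs)$. The price of that dependence is an $m$-dependent factor with $m=|J|$, and the smallness assumption \eqref{norm2} is what pays for it.

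\emph{Step 1: bound each coefficient.} Fix a finite $J\subset\NN$ with $|J|=m$. I would apply Lemma~\ref{lemma:|partial^{bs}u(by)|_V<} with the $J$-dependent sequence
\[
\rho_j := \frac{1}{m\,b_j}\quad (j\in J),
\]
and $\rho_j$ arbitrarily small outside $J$. Since $\sum_{j\in J}\rho_j|\psi_j|\le 1<\pi/2$, condition \eqref{<kappa} holds with $\kappa=1$, so $C_0=e(\cos 1)^{-1}\|f\|_{V'}$. (If some $b_j=0$ for $j\in J$, take $\rho_j$ large; the contribution then vanishes in the limit.) Only derivatives $\partial^\bk$ with $\supp(\bk)=J$ enter $\Dd^r_J u$, so this choice is legitimate for each $J$ separately.

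Repeating the chain leading to \eqref{norm{u_bs}{V}}, with $A_r(J)$ now bounded by Lemma~\ref{lemma:A_r(J)} (prefactor $\exp(\frac{a}{1-2b_0}\|\bb\|_{\ell_1})$ and $K_r=K_{a,r,\bb}$), gives for $\supp(\bs)=J$
\[
\|u_\bs\|_V\le C\,\bs^{-r}\,\big(C_{a,r}(2r)!K_{a,r,\bb}\big)^m\prod_{j\in J}\sum_{\ell=1}^{2r}(m b_j)^\ell =:\beta_\bs .
\]
Here $C$ depends on $a,\bb,f$.

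\emph{Step 2: sum over $J$.} Summing over $\bs$ with $\supp(\bs)=J$ uses \eqref{sum2}, giving a factor $C_{p,0}^m$; for the weighted version one also picks up $C_{\theta',\lambda}^m$ via \eqref{ineq-theta,lambda}. It then remains to control
\[
\sum_{m\ge 0}\big(C_{a,r}(2r)!K_{a,r,\bb}C_{p,0}\big)^{pm}\sum_{|J|=m}\prod_{j\in J}\Big(\sum_{\ell=1}^{2r}(mb_j)^\ell\Big)^p .
\]
The key combinatorial facts are:
\begin{itemize}
\item the symmetric-sum bound $\sum_{|J|=m}\prod_{j\in J}c_j\le \|\mathbf c\|_{\ell_1}^m/m!$;
\item $m^m\le e^m m!$.
\end{itemize}
The factor $m^m$ coming from $\prod_j(mb_j)$ is thus absorbed into $e^m m!$, and the $m!$ cancels. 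This is where the $e$ in $K$ appears, leaving a geometric series in $K\|\bb\|_{\ell_1}$, which converges by \eqref{norm2}. Where the factors $(mb_j)^p$ appear I would use $p\le1$ and $\bb\in\ell_p$, together with $b_j\le b_0<1/2$, to move between $\ell_p$- and $\ell_1$-type sums.

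\emph{Main obstacle.} The step I expect to be hardest is the higher powers $(mb_j)^\ell$ with $\ell\ge2$. When $mb_j>1$ these dominate and produce $m^{2rm}$, which is not absorbed by $m!$. I would first try to show that only the linear term matters, by splitting $J$ into indices with $mb_j\le1$, where $\sum_\ell (mb_j)^\ell\le 2r\,mb_j$, and the remaining few indices. The number of indices with $b_j\ge 1/m$ is at most $m\|\bb\|_{\ell_1}<m/K$, so they form a fixed fraction of $J$. If this bookkeeping fails, the fallback is to choose $\rho_j=\kappa/(m b_j)$ with $\kappa<\pi/2$ tuned so that the geometric factor still closes under \eqref{norm2}.

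\emph{Step 3: weighted summability.} With $\|\bp(\theta',\lambda)\bbeta\|_{\ell_p(\FF)}^p\le M$ in hand, the $\ell_2$-weighted statement \eqref{ell_2-summability-L} follows verbatim as at the end of the proof of Theorem~\ref{thm: summability-ell_p-rho}. Set $\sigma_\bs=\beta_\bs^{p/2-1}$ and $\theta'=\theta q/p=2\theta/(2-p)$, which accounts for the modified constant \eqref{K}.
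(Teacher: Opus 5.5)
You identify the obstacle correctly, but Step 2 has a genuine gap there, and neither remedy you sketch closes it as stated. Your Step 1 is sound: the $J$-dependent choice $\rho_j=1/(mb_j)$ is admissible in the parametric derivative bound, because only $\partial^{\bk}$ with $\supp(\bk)=J$ enter $\Dd^r_J u$. Your Step 3 is verbatim the paper's argument.

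With your weights the loss is $m^{|\bk|_1}$ per term, and $|\bk|_1$ can be as large as $2rm$. The symmetric-sum bound combined with $m^m\le e^m m!$ pays for only one power of $m$ per index. In your splitting, once the indices with $mb_j\le 1$ have used up the factorial, the indices $J_1$ with $mb_j>1$ still contribute $\prod_{j\in J_1}(mb_j)^{2rp}$. Bounding each factor by $(mb_0)^{2rp}$ and $|J_1|$ by $m\|\bb\|_{\ell_1(\NN)}<m/K$, as you propose, leaves a factor of order $m^{2rpm/K}$ up to exponential factors. This grows superexponentially. For $p=1$ nothing in your estimate compensates it. For $p<1$ the $(m!)^{p-1}$ gain of the linear part beats it only if $2rp/K<(1-p)(1-1/K)$, which fails for $p$ near $1$. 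Tuning $\kappa\in(1,\pi/2)$ only rescales $\brho$, i.e.\ multiplies each term by $\kappa^{-|\bk|_1}$, so it does nothing against the $m^{\ell}$ growth. The decisive summation estimate is therefore missing.

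The missing idea is to let the weights depend on the multi-index rather than on its support. The paper takes $\rho_{\bk,j}=k_j/(b_j|\bk|_1)$ for $j\in\supp(\bk)$. Using $\bk^{\bk}\ge \bk!$ and $n^n\le e^n n!$, this gives, for $\supp(\bk)=J$ and $|\bk|_\infty\le 2r$,
\[
\|\partial^{\bk}u(\by)\|_V\le C_0\,\frac{\bk!\,\bb^{\bk}\,|\bk|_1^{|\bk|_1}}{\bk^{\bk}}\,e^{\|b(\by)\|_{L_\infty(D)}}\le C_0\,((2r)!)^{|J|}\,\frac{|\bk|_1!}{\bk!}\,(e\bb)^{\bk}\,e^{\|b(\by)\|_{L_\infty(D)}} .
\]
Now the growth is governed by $|\bk|_1$ and has multinomial form. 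Next use $(\sum_{\bk}x_{\bk})^p\le\sum_{\bk}x_{\bk}^p$ for $p\le 1$, and absorb all $C^{|J|}$ factors into $\bar{\bb}=K\bb$ (legitimate since $|J|\le|\bk|_1$). Everything then reduces to $\sum_{\bk\in\FF}\bigl(\frac{|\bk|_1!}{\bk!}\bar{\bb}^{\bk}\bigr)^p<\infty$. This is \cite[Theorem 7.2]{CDS10} under $\bar{\bb}\in\ell_p(\NN)$ and $\|\bar{\bb}\|_{\ell_1(\NN)}<1$, which is exactly where \eqref{norm2} enters. The identity $\sum_{|\bk|_1=n}\frac{n!}{\bk!}\bar{\bb}^{\bk}=\|\bar{\bb}\|_{\ell_1(\NN)}^n$ handles all derivative orders at once, so large $mb_j$ need no separate treatment.

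If you want to keep your $J$-dependent weights, the simplest repair is $m^{|\bk|_1}\le|\bk|_1^{|\bk|_1}\le e^{|\bk|_1}|\bk|_1!$, valid since $|\bk|_1\ge m$. This puts you in the same multinomial form, but with an extra factor $((2r)!)^{|J|}$. You would then prove the result only under a smallness condition stronger than \eqref{norm2}. For $p<1$ you would still need an $\ell_p$-criterion such as \cite[Theorem 7.2]{CDS10}.
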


\begin{proof} 
	We define	the sequence $\brho_{\bk} = \brac{\rho_{\bk,j}}_{j \in \NN}$ depending on $\bk \in \FF$, by 
	\begin{equation*}  
		\rho_{\bk,j}:= 
\begin{cases}
\frac{k_j}{b_j|\bk|_1} \ \ &\text{if}\ \ j\in \supp(\bk), \\
0\ \ &\text{if}\ \ j\not \in \supp(\bk).
\end{cases}		
\end{equation*}		
Notice that
		\begin{equation*}\label{<1}
	\sup_{\bk \in \FF}	\Bigg\|\sum_{j \in \NN} \rho_{\bk,j} |\psi_j | \Bigg\|_{L_\infty(D)} 
		\leq 1.
	\end{equation*}
	Applying Lemma \ref{lemma:|partial^{bs}u(by)|_V<}  gives for $\by \in \RRip$  and $\bk\in \FF$,
	\begin{equation*}  
\begin{aligned}
	\norm{\partial^\bk u(\by)}{V} &
\le 
C_0\frac{\bk!}{\brho_\bk^\bk}
\exp\big( \|b(\by)\|_{L_\infty(D)} \big)
=
C_0\frac{\bk!\bb^\bk |\bk|^{|\bk|}}{\bk^{\bk}}
\exp\big( \|b(\by)\|_{L_\infty(D)} \big).
\end{aligned}
	\end{equation*}
	Similarly to the proof of the previous theorem we can derive	
\begin{equation*}
\begin{aligned}
\int_{\RRip}\norm{\Dd^r_J u(\by)}{V}^2 \rd \blambda(\by) 
&\le 
A_r(J)^2	C_0^2 	C_{a,r}^{2|J|} 
\Bigg( \sum_{\supp (\bk)=J, \ |\bk|_\infty \le 2r} 
\frac{\bk!\bb^\bk |\bk|^{|\bk|}}{\bk^{\bk}}\Bigg)^2,
\end{aligned}
\end{equation*}
where $C_{a,r}$ is the constant as in \eqref{p_j} and $A_r(J)$ as in \eqref{A_r(J):=}.
By Lemma~\ref{lemma:A_r(J)} and the inequalities
$\frac{|\bk|^{|\bk|}}{\bk^{\bk}}\le \frac{|\bk|! e^{|\bk|}}{\bk!}$ and $\bk!\leq ((2r)!)^{|J|}$ with $\supp(\bk)=J$ and $|\bk|_\infty \le 2r$,
we have
\begin{equation*}
	\begin{aligned}
		\int_{\RRip}\norm{\Dd^r_J u(\by)}{V}^2 \rd \blambda(\by) 
		&\leq 	C_1^2K_{a,r,\bb}^{2|J|}		C_{a,r}^{2|J|} 
		((2r)!)^{2|J|}\Bigg( \sum_{\supp (\bk)=J, \ |\bk|_\infty \le 2r} \frac{(e\bb)^\bk |\bk|!}{\bk!}\Bigg)^2,
	\end{aligned}
\end{equation*}
where
	\begin{equation*} \label{}
C_1:= C_0 \exp \brac{\frac{a}{1-2b_0}\norm{\bb}{\ell_1(\NN)}}. 
\end{equation*}
This together with  and \eqref{P-identity} implies for $\bs \in \FF$ with $\supp (\bs)=J$,
		\begin{equation} \label{norm{u_bs}}
		\norm{u_\bs}{V}		
		\le 
	C_1 C_2^{|J|}	\bs^{-r} \sum_{\supp (\bk)=J, \ |\bk|_\infty \le 2r} \frac{(e\bb)^\bk |\bk|!}{\bk!}.
	\end{equation}
	where
	\begin{equation*} \label{}
		C_2:= (2r)!K_{a,r,\bb}C_{a,r}. 
	\end{equation*}
Denote by $\beta_\bs$ the right-hand side of \eqref{norm{u_bs}} and let
$\bbeta=(\beta_\bs)_{\bs \in \FF}$.  For any $\theta',\lambda\geq 0$ we will prove that there exists a constant 
$M$ depending on $a, \bb, p, \theta, \lambda$ only	 such that  
\begin{equation} \label{norm-1}
\norm{\bp(\theta,\lambda)\bbeta}{\ell_p(\FF)}
\ \le \
M^{1/p}.
\end{equation}
By using the inequality \eqref{ineq-theta,lambda},  we get
	\begin{equation*} \label{sum1}
\sum_{\supp (\bs) = J}	\big(p_\bs(\theta',\lambda) \beta_\bs \big)^p
		\le 
C_1(C_{\theta',\lambda}C_2)^{p|J|} \sum_{\supp (\bs) = J}	\bs^{-p(r-\theta')}	
		\brac{\sum_{\supp (\bk) = J, \ |\bk|_\infty \le 2r} \frac{(e\bb)^\bk |\bk|!}{\bk!}}^p,
	\end{equation*}
	where $C_{\theta',\lambda}$ is as in \eqref{ineq-theta,lambda}.
 Let  $r= r_{p,\theta'}$ be chosen and $C_{p,\theta'}$ be defined as in \eqref{C_{p,theta}}. 
By \eqref{sum2} we get
		\begin{equation*}
	\begin{aligned}
\sum_{\supp (\bs) = J}	\big(p_\bs(\theta',\lambda) \beta_\bs \big)^p		
&\le 
C_1C_3^{p|J|}
\sum_{\supp (\bk) = J, \ |\bk|_\infty \le 2r} \brac{\frac{(e\bb)^\bk |\bk|!}{\bk!}}^p
\\
& = \,
C_1\sum_{\supp (\bk) = J, \ |\bk|_\infty \le 2r} 	\brac{\frac{(eC_3\bb)^\bk |\bk|!}{\bk!}}^p,		 
	\end{aligned}
	\end{equation*}
		where $C_3:=e^{-1}K$ and $K$ is as in \eqref{K}.
Then we obtain
	\begin{equation} \label{sum3}
	\sum_{\bs\in \FF}	p_\bs(\theta',\lambda)^p \beta_\bs^p	
\le  
	C_1\sum_{\bk \in \FF: \, |\bk|_\infty \le 2r} 
\brac{\frac{\bar{\bb}^\bk |\bk|!}{\bk!}}^p, 
\end{equation}
where $\bar{\bb}:= K\bb$. 
 We have by \eqref{norm-1} that $\norm{\bar{\bb}}{\ell_p(\NN)} < \infty$ and $\norm{\bar{\bb}}{\ell_1(\NN)} < 1$.
Hence, from \cite[Theorem 7.2]{CDS10} and \eqref{sum3} we derive \eqref{norm2}, and therefore, 
$\norm{\bp(\theta,\lambda)\bbeta}{\ell_p(\FF)}\ \le \ M^{1/p}$.  It follows from \eqref{norm{u_bs}} that $\brac{\norm{u_\bs}{V}}_{\bs \in \FF}\in \ell_p(\FF)$ by choosing $\theta'=0$.

The  proof of \eqref{ell_2-summability-L} is similar to that of \eqref{ell_2-summability-rho} in Theorem~\ref{thm: summability-ell_p-rho}.
	\hfill
\end{proof}

\begin{remark} \label{remark2.1}
{\rm	
We compare our  method in establishing the sparsity for Laguerre GPC expansion coefficients with  the method  for Hermite GPC expansion coefficients presented in \cite[Section 3.6]{DNSZ2023}. 
Both  methods are based on the bounds of parametric partial derivatives as in Lemma~\ref{lemma:|partial^{bs}u(by)|_V<}. They both also employ  the criterion for $\ell_p$-summability as in \cite[Theorem 7.2]{CDS10}. 

A notable difference distinguishes the two methods. The method in \cite[Section 3.6]{DNSZ2023} relies on an identity between an weighted square sum of the energy norms of Hermite GPC expansion coefficients and weighted square sum of  the $L_2(\RRi,V; \bgamma)$-norms of the parametric partial derivatives \cite[Theorem 3.3]{BCDM17}. This equality is established by  combining Parseval's identity and
Rodrigues' formula, tool sets that do not extend to the Laguerre setting. 
By contrast, our method employs the identity \eqref{identity1} from Lemma~\ref{lemma:Dd^r_J v =Laguerre} which expresses the derivative  the derivative $\Dd_J^r v$ in terms of Laguerre GPC expansion coefficients of $v$. This representation  is derived from  the Laguerre's  second order differential equation satisfied by the Laguerre polynomials.
}
\end{remark}

\section{Semi-discrete  approximations}
\label{Semi-discrete parametric approximations}
In this section, from the weighted $\ell_2$-summability results of 
Theorems~\ref{thm: summability-ell_p-rho} and \ref{thm: summability-ell_p-b-L}, we  derive convergence rates for the semi-discrete linear parametric-variable approximations of the solution $u(\by)$ to the parametric equation with with log-gamma random inputs  \eqref{SPDE}--\eqref{lognormal}($\operatorname{\Gamma}$) by sparse-grid polynomial interpolations, extended least-squares sampling algorithms and the associated semi-discrete quadratures. 

\subsection{ Sparse-grid polynomial  interpolation}
\label{ Hermite gpc interpolation approximation}
In this section, we construct sparse-grid polynomial interpolations for semi-discrete parametric approximation of the solution $u(\by)$ to the parametric equation \eqref{SPDE}--\eqref{lognormal}{\rm ($\operatorname{\Gamma}$)}.
Observe that under the assumption of Lemma \ref{lemma:EE},
Notice that $u(\by)$ is well-defined for every $\by\in U_0$, where $U_0$ is the set defined as 
in \eqref{U_0}. By Lemma \ref{lemma:EE}, the set $U_0$ has full measure, i.e., $\blambda(U_0)=1$, and	contains  all $\by \in \RRip$ with $|\by|_0 < \infty$, where $|\by|_0$ denotes the number of nonzero components $y_j$ of $\by$. Moreover, $u(\by)$ can be treated as a representative of an element in 
$L_2(\RRip,V;\blambda)$.

For $m \in \NN_0$, let $Y_m = (y_{m;k})_{k=1}^m$ be the increasing sequence of  the $m$ roots of the Laguerre polynomial $L_m$, ordered as
$
0 < y_{m;1} < \ldots < y_{m;m}.
$
We use also the convention $Y_0 = (y_{0;0})$ with $y_{0;0} = 0$.

For  a function $v$  on $\RRp$ taking values in a Hilbert space $V$ and $m \in \NN_0$, we define the   Lagrange interpolation operator $I_m$  by
\begin{equation} \label{I_ v}
	I_m v:= \ \sum_{k=1}^m v(y_{m;k}) \ell_{m;k}, \quad 
	\ell_{m;k}(y) := \prod_{1 \le j \le m, \ j\not=k}\frac{y - y_{m;j}}{y_{m;k} - y_{m;j}},
\end{equation}		
(in particular, $I_0 v = v(y_{0;0})\ell_{0;0}(y)= v(0)$ and $\ell_{0;0}(y)=1$). Notice that $I_m v$ is a function on $\RRp$ taking values in $V$ and  interpolating $v$ at $y_{m;k}$, i.e., $I_m v(y_{m;k}) = v(y_{m;k})$. 

Let
\begin{equation*} \label{lambda_m}
	\lambda_m:= \ \sup_{\big\|v\sqrt{l_a}\big\|_{L_\infty(\RR_+)} \le 1} 
	\big\|(I_m v)\sqrt{l_a}\big\|_{L_\infty(\RR_+)} 
\end{equation*}	
be the Lebesgue constant, where $l_a$ is given as in \eqref{l_a}. 
It was proven in \cite{MO2001} that
\begin{equation} \nonumber
	\lambda_m
	\ \approx \
	Cm^{1/6}, \quad m \in \NN,
\end{equation}
for some positive constant $C$ independent of $m$.


%
%

For  a function $v$  on $\RRip$ taking values in a Hilbert space $V$ and $\bs \in \FF$, 
 we introduce the tensor product operator $\Delta_\bs$, $\bs \in \FF$, by
\begin{equation*} \label{Delta_bs v}
	\Delta_\bs v
	:= \
	\bigotimes_{j \in \NN} \Delta_{s_j} v, \quad 
	\Delta_m
	:= \
	I_m - I_{m-1}, \ \ I_{-1} = 0 \ \ (m \in \NN_0),
\end{equation*}
where the  operator
$\Delta_{s_j}$ is successively applied to the univariate function $\bigotimes_{i<j} \Delta_{s_i} v$ by considering it as a 
function of  variable $y_j$ with the other variables held fixed.
We define for $\bs \in \FF$,
\begin{equation} \nonumber
	\ell_{\bs;\bk}
	:= \
	\bigotimes_{j \in \NN} \ell_{s_j;k_j}, \quad
	\pi_\bs
	:= \
	\prod_{j \in \NN} \pi_{s_j}.
\end{equation}
For $\bs \in \FF$ and $\bone \le \bk \le\bs$, let $E_\bs$ be the subset in $\FF$ of all $\be$ such that $e_j$ is either $1$ or $0$ if $s_j > 0$, and $e_j$ is $0$ if $s_j = 0$, and let $\by_{\bs;\bk}:= (y_{s_j;k_j})_{j \in \NN} \in \RRip$. Here, the inequities $\bone \le \bk \le\bs$ mean by convention that $1 \le k_j \le s_j$ for $j \in \supp (\bs)$ and 
$\supp (\bk) = \supp (\bs)$.  Recall that $|\bs|_1 := \sum_{j \in \NN} s_j$ for  $\bs \in \FF$. It is easy to check that the interpolation operator $\Delta_\bs$ can be represented in the form
\begin{equation} \label{Delta_bs=}
	\Delta_\bs v				
	\ = \
	\sum_{\be \in E_\bs} (-1)^{|\be|_1} \sum_{\bone \le \bk \le \bs - \be} v(\by_{\bs - \be;\bk}) \ell_{\bs - \be;\bk}.
\end{equation}

Let $0 < q < \infty$ and $\bsigma= (\sigma_\bs)_{\bs \in \FF}$ be a set of positive numbers.
Let the set $\Lambda(\xi)$  for $\xi >1$ be defined by
\begin{equation} \label{Lambda(xi):=}
	\Lambda(\xi)
	:= \ 
	\ \big\{\bs \in \FF: \, \sigma_\bs\le \xi^{1/q}  \big\}.
\end{equation}
We introduce the sparse-grid polynomial  interpolation operator $I_{\Lambda(\xi)}$  by
\begin{equation*} \label{I_Lambda}
	I_{\Lambda(\xi)}
	:= \
	\sum_{\bs \in \Lambda(\xi)} \Delta_\bs.
\end{equation*}
 By the formula \eqref{Delta_bs=}  we  can  represent $I_{\Lambda(\xi)}$ in the form
\begin{equation} \label{I_Lambda(xi)=}
	I_{\Lambda(\xi)} v			
	\ = \
	\sum_{(\bs,\be,\bk) \in G(\xi)}  (-1)^{|\be|_1} v(\by_{\bs - \be;\bk})\ell_{\bs - \be;\bk},
\end{equation}
where	
\begin{equation*} \label{G(xi):=}
	G(\xi)				
	:= \
	\{(\bs,\be,\bk) \in \FF \times \FF \times \FF: 
	\, \bs \in \Lambda(\xi), \ \be \in E_\bs, \ \bone \le \bk \le \bs - \be \}.
\end{equation*}
Note $I_{\Lambda(\xi)} v$ is determined by the values of $v$ at the points $\by_{\bs - \be;\bk}$, 
$(\bs,\be,\bk) \in G(\xi)$, and the number of these points is $|G(\xi)|$. Moreover, 
$|\by_{\bs - \be;\bk}|_0 < \infty$, and, consequently  $\by_{\bs - \be;\bk} \in U_0$ for every $(\bs,\be,\bk) \in G(\xi)$. Hence, for the solution $u$ to the parametric elliptic PDE \eqref{SPDE}--\eqref{lognormal}{\rm ($\operatorname{\Gamma}$)}, the function $I_{\Lambda(\xi)} u$ is well-defined.

\begin{theorem}\label{thm:coll-approx-X}
	Let the assumptions of either Theorem~\ref{thm: summability-ell_p-rho} or  
	Theorem~\ref{thm: summability-ell_p-b-L} for  $0<p<1$. 		Let $\Lambda(\xi)$ be the set defined in \eqref{Lambda(xi):=} for the set $\bsigma$  in these theorems satisfying \eqref{ell_2-summability-rho} or \eqref{ell_2-summability}, respectively. Then there exists a constant $C$ such that for each $n > 1$, we can construct a sequence of points  
		$(\by_{\bs - \be;\bk})_{(\bs,\be,\bk) \in G(\xi_n)}$ so that
		$|G(\xi_n)| \le n$ and
		\begin{equation*} \label{u-I_Lambda u}
			\|u -I_{\Lambda(\xi_n)}u\|_{L_2(\RRip,V;\blambda)} 
			\leq C n^{-(1/p - 1)}.
		\end{equation*}
\end{theorem}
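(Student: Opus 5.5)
We follow the standard scheme of \cite{DNSZ2023} for sparse-grid interpolation of Hilbert-valued functions, adapted to the Laguerre setting. The argument has three steps. First we bound the interpolation error through the GPC coefficients. Then we bound the number of points $|G(\xi)|$. Finally we choose $\xi=\xi_n$.

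\emph{Step 1: error estimate.} Expand $u=\sum_{\bs\in\FF}u_\bs L_\bs$, with convergence in $L_2(\RRip,V;\blambda)$. For the Laguerre points, $I_m$ reproduces polynomials of degree $<m$, i.e.\ $I_m L_k=L_k$ for $k<m$. This reproduction property gives:
\begin{itemize}
\item for each $\bs$, the tensor interpolant $I_\bs:=\bigotimes_j I_{s_j+1}$ (reindexed to match the convention $\Delta_m=I_m-I_{m-1}$) reproduces $L_\bs$;
\item consequently, $\Delta_{\bs'}L_\bs=0$ unless $\bs'\le\bs$ componentwise.
\end{itemize}
Since $\bsigma$ is built from the monotone majorant $\beta_\bs$ (the bounds in \eqref{norm{u_bs}V} and \eqref{norm{u_bs}} are decreasing in each $s_j$ once $r$ is fixed), we may assume $\bsigma$ is nondecreasing, so $\Lambda(\xi)$ is downward closed. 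Hence $I_{\Lambda(\xi)}L_\bs=L_\bs$ for $\bs\in\Lambda(\xi)$, and
\[
\|u-I_{\Lambda(\xi)}u\|_{L_2(\RRip,V;\blambda)}\le\sum_{\bs\notin\Lambda(\xi)}\|u_\bs\|_V\,\big\|L_\bs-I_{\Lambda(\xi)}L_\bs\big\|_{L_2(\RRip;\blambda)} .
\]
To bound $\|\Delta_{\bs'}L_\bs\|_{L_2}$, we use the weighted Lebesgue constant bound $\lambda_m\le Cm^{1/6}$ from \cite{MO2001}. We also need $\|L_k\sqrt{l_a}\|_{L_\infty}\le C(1+k)^{\tau}$ for some fixed $\tau$, which is standard (Szeg\H{o}/Muckenhoupt). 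Combining these, $\|\Delta_{\bs'}L_\bs\|_{L_2}\le\prod_j C(1+s_j)^{\tau'}$ for $\bs'\le\bs$. Summing over the at most $\prod_j(1+s_j)$ indices $\bs'\le\bs$ gives
\[
\big\|L_\bs-I_{\Lambda(\xi)}L_\bs\big\|_{L_2(\RRip;\blambda)}\le p_\bs(\theta,\lambda)
\]
for suitable $\theta,\lambda$. This is the reason the statement of Theorems~\ref{thm: summability-ell_p-rho} and \ref{thm: summability-ell_p-b-L} allows arbitrary $\theta,\lambda$.

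Next apply Cauchy--Schwarz, with $\sigma_\bs>\xi^{1/q}$ for $\bs\notin\Lambda(\xi)$:
\[
\sum_{\bs\notin\Lambda(\xi)}\|u_\bs\|_Vp_\bs(\theta,\lambda)\le\Big(\sum_\bs(\sigma_\bs\|u_\bs\|_V)^2\Big)^{1/2}\Big(\sum_{\sigma_\bs>\xi^{1/q}}p_\bs^2\sigma_\bs^{-2}\Big)^{1/2}.
\]
Since $q>2$ (because $p<1$), we have $\sigma_\bs^{-2}\le\sigma_\bs^{-q}\xi^{1-2/q}$ on this set. With $p_\bs\ge1$ this yields the bound $CM^{1/2}\,\xi^{-(1/p-1)/\ldots}$. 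More precisely, the tail is at most $\xi^{-(1/2-1/q)}=\xi^{-(1/p-1)}$, using $\|\bp(\theta,\lambda)\bsigma^{-1}\|_{\ell_q}^q\le M$ from \eqref{ell_2-summability-rho}.

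\emph{Step 2: point count.} By \eqref{I_Lambda(xi)=},
\[
|G(\xi)|\le\sum_{\bs\in\Lambda(\xi)}\sum_{\be\in E_\bs}\prod_j s_j\le\sum_{\bs\in\Lambda(\xi)}p_\bs(1,2).
\]
On $\Lambda(\xi)$ we have $1\le\xi\sigma_\bs^{-q}$, so
\[
|G(\xi)|\le\xi\sum_{\bs}p_\bs(1,2)\sigma_\bs^{-q}\le\xi\,\|\bp(1/q,2)\bsigma^{-1}\|_{\ell_q}^q\le M\xi .
\]
Here we invoke the theorems with $\theta$ taken as the maximum of $1/q$ and the exponent required in Step 1; enlarging $\theta$ only enlarges $p_\bs$, so one set $\bsigma$ serves both steps.

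\emph{Step 3: conclusion.} Take $\xi_n:=n/M$. Then $|G(\xi_n)|\le n$ and the error is at most $C\xi_n^{-(1/p-1)}=C'n^{-(1/p-1)}$.

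The main obstacle is Step 1: the polynomial bound $\|\Delta_{\bs'}L_\bs\|_{L_2(\blambda)}\lesssim\prod_j(1+s_j)^{\tau'}$. This requires transferring the weighted $L_\infty$ Lebesgue constant of \cite{MO2001} into an $L_2(\lambda)$ bound, together with a uniform sup bound for $L_k\sqrt{l_a}$ on $\RRp$. We would use $\|g\|_{L_2(\lambda)}\le\|g\sqrt{l_a}\|_{L_\infty}\cdot|\text{supp}|^{1/2}$, which is problematic on the unbounded $\RRp$. If that fails, we would instead use the bound $\|I_mv\|_{L_2(\lambda)}\le C m^{\tau}\|v\sqrt{l_a}(1+y)^{\epsilon}\|_{L_\infty}$ and absorb the extra factor into $p_\bs(\theta,\lambda)$.
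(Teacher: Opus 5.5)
Your three-step plan is the route the paper intends; its proof only defers to \cite[Corollary 3.1]{Dung21}. Step~2 and the choice $\xi_n=n/M$ are fine. Step~1, however, has two genuine gaps.

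\emph{The univariate estimate is left open.} Everything rests on it, and the transfer you propose fails, as you suspect. The $(1+y)^{\epsilon}$ fallback is unjustified and unnecessary. The missing observation is that the transfer is only needed for polynomials: $\Delta_mL_k$ has degree $<m$. Weighted polynomials $\varphi\sqrt{l_a}$ with $\deg\varphi<m$ live essentially on $[0,Cm]$, by an infinite--finite range inequality of Mhaskar--Rakhmanov--Saff type. This gives the Nikol'skii-type bound $\|\varphi\|_{L_2(\RRp;\lambda)}=\|\varphi\sqrt{l_a}\|_{L_2(\RRp)}\le Cm^{1/2}\|\varphi\sqrt{l_a}\|_{L_\infty(\RRp)}$. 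Combined with $\lambda_m\le Cm^{1/6}$ and $\sup_k\|L_k\sqrt{l_a}\|_{L_\infty(\RRp)}<\infty$, it yields $\|\Delta_mL_k\|_{L_2(\RRp;\lambda)}\le Cm^{2/3}$ for $2\le m\le k+1$, and $\Delta_mL_k=0$ for $m\ge k+2$. The cases $m\le1$ involve the node $y_{0;0}=0$ and are handled directly by $|L_k(0)|=O(k^{(a-1)/2})$. Hence $\|L_\bs-I_{\Lambda}L_\bs\|_{L_2(\RRip;\blambda)}\le p_\bs(\theta_0,\lambda_0)$ for suitable $\theta_0,\lambda_0$ depending on $a$. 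This requires $a\ge1$, as does the paper's use of $\lambda_m$. For $a<1$ the weight $\sqrt{l_a}$ blows up at $0$ while $L_k(0)\ne0$, so $\|L_k\sqrt{l_a}\|_{L_\infty}$ and $\lambda_m$ are infinite.

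\emph{The monotonicity claim is false.} $\beta_\bs$ decreases in $s_j$ only for $j\in\supp(\bs)$. When $s_j$ passes from $0$ to $1$, the bound $\beta_\bs$ from the proof of Theorem~\ref{thm: summability-ell_p-rho} is multiplied by $c_j:=C_1\sum_{\ell=1}^{2r}\rho_j^{-\ell}$, which may exceed $1$. In Theorem~\ref{thm: summability-ell_p-b-L} it is multiplied by at least $C_2e\,b_j(|\supp(\bs)|+1)$, which is unbounded. So $\Lambda(\xi)$ need not be downward closed, and the reproduction $I_{\Lambda(\xi)}L_\bs=L_\bs$ for $\bs\in\Lambda(\xi)$ is unavailable. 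You must replace $\beta_\bs$ by the monotone majorant $\sup_{\bs'\ge\bs}\beta_{\bs'}$ and re-verify the weighted summability. For Theorem~\ref{thm: summability-ell_p-rho} this costs only the finite factor $\prod_{j:\,c_j>1}c_j$, since $\rho_j\to\infty$. For Theorem~\ref{thm: summability-ell_p-b-L} the majorant is not within a constant of $\beta_\bs$, so its summability must be re-established. One way is to rerun the \cite[Theorem 7.2]{CDS10} argument over all $\bk$ with $k_j\ge1$ on $\supp(\bs)$.

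\emph{The tail computation needs correcting.} For $p<1$ one has $q=2p/(2-p)<2$, not $q>2$. With $q>2$ your Cauchy--Schwarz step would actually fail, since $\ell_q\not\subset\ell_2$. Your inequality $\sigma_\bs^{-2}\le\sigma_\bs^{-q}\xi^{1-2/q}$ on $\{\sigma_\bs>\xi^{1/q}\}$ holds precisely because $q<2$. It produces the factor $\xi^{1/2-1/q}=\xi^{-(1/p-1)}$; your $\xi^{-(1/2-1/q)}$ has the wrong sign. Moreover, the sum you need is $\sum_\bs p_\bs(\theta,\lambda)^2\sigma_\bs^{-q}=\|\bp(2\theta/q,\lambda)\bsigma^{-1}\|_{\ell_q(\FF)}^q$. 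So the summability theorems must be invoked with $2\theta/q$; the remark ``$p_\bs\ge1$'' points the wrong way, since $p_\bs^2\ge p_\bs^q$.

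\emph{Two smaller points.} First, your reindexing to $I_{s_j+1}$ changes the operator in the statement. With the paper's nodes, $I_m$ reproduces only degree $\le m-1$. Indeed $I_kL_k=0$, so $\Delta_{\bs+\bone_{\supp(\bs)}}L_\bs=L_\bs$. This is absorbable because $\sigma_{\bs+\bone_{\supp(\bs)}}\le2^{r(1-p/2)|\supp(\bs)|}\sigma_\bs$, but it has to be handled. Second, writing $I_{\Lambda(\xi)}u=\sum_\bs u_\bs I_{\Lambda(\xi)}L_\bs$ requires the expansion to represent $u$ at the nodes, which form a $\blambda$-null set. Absolute convergence there follows from $\sum_\bs p_\bs(\theta,\lambda)\|u_\bs\|_V\le M^{1/2}\|\bp(\theta,\lambda)\bsigma^{-1}\|_{\ell_2(\FF)}<\infty$, using $q<2$ and the polynomial growth of $L_\bs$ at finitely supported points. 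Identifying the sum with $u$ there, however, needs an argument beyond $L_2$ convergence.
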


\begin{proof}
	This theorem can be proven similarly to \cite[Corollary 3.1]{Dung21} for functions in the Bochner space
	$L_2(\RRi,X;\bgamma)$. 
	\hfill
\end{proof}

\subsection{Sparse-grid quadrature for numerical integration}

If $v$ is a function defined on $\RR$ taking values in a Hilbert space $X$,  the function  $I_m v$ in  \eqref{I_ v}  generates the quadrature formula  which is defined by
\begin{equation} \nonumber
	Q_m v
	:= \ \int_{\RR_+} I_m v(y) \, \rd \lambda(y)
	\ = \
	\sum_{k=0}^m\omega_{m;k}\, v(y_{m;k}), \quad
	\omega_{m;k}
	:=  \int_{\RR_+} \ell_{m;k}(y) \, \rd \lambda(y).
\end{equation}
For a function $v$ defined on $\RRip$ taking value in $X$, we introduce the operator $\Delta^{{\rm Q}}_\bs$ defined for $\bs \in \FF$ by
\begin{equation} \nonumber
	\Delta^{{\rm Q}}_\bs v
	:= \
	\bigotimes_{j \in \NN} \Delta^{{\rm Q}}_{s_j} v, \quad 	
	\Delta^{{\rm Q}}_m
	:= \
	Q_m - Q_{m-1},  \ \ Q_{-1} := 0 \ \ (m \in \NN),
\end{equation} 
where the univariate operator
$\Delta^{{\rm Q}}_{s_j}$ is applied to the univariate function 
$\bigotimes_{i<j} \Delta^{{\rm Q}}_{s_i} v$ by considering it as a 
function of  variable $y_i$ with the other variables held fixed.  
For a finite set $\Lambda \subset \FF$, we introduce the quadrature operator 
$Q_\Lambda$ by
\begin{equation} \nonumber
	Q_\Lambda v
	:= \
	\sum_{\bs \in \Lambda} \Delta^{{\rm Q}}_\bs  v.
\end{equation} 
Further, if $\phi \in V'$ is a bounded linear functional on $V$, denote by $\langle \phi, v \rangle$ the value of $\phi$ in $v$.  

By the formula \eqref{I_Lambda(xi)=} we  can  represent the operator $Q_{\Lambda(\xi)}$ in the form
\begin{equation*} \label{Q_Lambda_rev(xi)=}
	Q_{\Lambda(\xi)}v			
	\ = \
	\sum_{(\bs,\be,\bk) \in G(\xi)}  (-1)^{|\be|_1} \omega_{\bs - \be;\bk} v(\by_{\bs - \be;\bk}),
\end{equation*}
where	$\omega_{\bs;\bk}:= \prod_{j \in \supp (\bs)} \omega_{s_j;k_j}$ for 
$\bs \in \FF$ and $\bone \le \bk \le \bs$.

\begin{theorem}\label{thm:quadrature-X}
		Under the assumption of Theorem~\ref{thm:coll-approx-X},  
	 there exists a constant $C$ such that for each $n > 1$, we can construct a sequence of points  
		$(\by_{\bs - \be;\bk})_{(\bs,\be,\bk) \in G(\xi_n)}$ so that
		$|G(\xi_n)| \le n$ and
		\begin{equation*} \label{u-Q_Lambda u}
			\left\|\int_{\RRip}u(\by)\, \rd \blambda(\by )- Q_{\Lambda(\xi_n)}u\right\|_V
			\ \le \
			Cn^{-(1/p - 1)},
		\end{equation*}		
		and, if  additionally, $\phi \in V'$ is a bounded linear functional on $V$, 
		\begin{equation*} \label{u-Q_Lambda u-phi}
			\left|\int_{\RRip} \langle \phi, u (\by) \rangle\, \, \rd 
			\blambda(\by ) -  \left\langle \phi, Q_{\Lambda(\xi_n)} u \right\rangle \right|
			\ \le \
			C\norm{\phi}{V'} n^{-(1/p - 1)}.
		\end{equation*}	
\end{theorem}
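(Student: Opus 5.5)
The plan is to deduce the quadrature bound directly from the interpolation estimate of Theorem~\ref{thm:coll-approx-X}, using that $Q_{\Lambda(\xi)}$ is exactly the integral of the interpolant. First I will record the identity
\begin{equation*}
	Q_{\Lambda(\xi)} v = \int_{\RRip} I_{\Lambda(\xi)} v(\by)\, \rd\blambda(\by).
\end{equation*}
In one variable, $Q_m v = \int_{\RRp} I_m v \,\rd\lambda$ holds by definition, so $\Delta^{\rm Q}_m v = \int_{\RRp} \Delta_m v\, \rd\lambda$. Since $\blambda$ is a tensor product measure and $\Delta_\bs v$ is a finite linear combination of tensor products $\ell_{\bs-\be;\bk}$ (formula \eqref{Delta_bs=}), Fubini gives $\Delta^{\rm Q}_\bs v = \int_{\RRip}\Delta_\bs v\,\rd\blambda$. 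Summing over $\bs\in\Lambda(\xi)$ yields the identity. It also reproduces the representation of $Q_{\Lambda(\xi)}$ via the weights $\omega_{\bs-\be;\bk}$, since $\int \ell_{\bs-\be;\bk}\,\rd\blambda = \omega_{\bs-\be;\bk}$. The finiteness of $\Lambda(\xi)$, which follows from $\norm{\bsigma^{-1}}{\ell_q(\FF)}<\infty$, makes all sums finite.

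Next I take $\xi_n$ and the points exactly as in Theorem~\ref{thm:coll-approx-X}, so that $|G(\xi_n)|\le n$. Then
\begin{equation*}
	\left\|\int_{\RRip} u\,\rd\blambda - Q_{\Lambda(\xi_n)}u\right\|_V = \left\|\int_{\RRip}\big(u - I_{\Lambda(\xi_n)}u\big)\rd\blambda\right\|_V \le \|u-I_{\Lambda(\xi_n)}u\|_{L_1(\RRip,V;\blambda)}.
\end{equation*}
Because $\blambda$ is a probability measure, Cauchy--Schwarz (or Jensen) bounds the right-hand side by $\|u-I_{\Lambda(\xi_n)}u\|_{L_2(\RRip,V;\blambda)}\le Cn^{-(1/p-1)}$. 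For this step I need $u$ and $I_{\Lambda(\xi_n)}u$ to be Bochner integrable. The function $u$ lies in $L_2\subset L_1$. The interpolant is a finite sum of polynomials with coefficients $u(\by_{\bs-\be;\bk})\in V$, which are well defined since these points lie in $U_0$, and polynomials are integrable against the gamma measure.

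For the functional version, linearity and continuity of $\phi$ let me commute it with the Bochner integral and with $Q_{\Lambda(\xi_n)}$. The difference then equals $\langle\phi, \int u\,\rd\blambda - Q_{\Lambda(\xi_n)}u\rangle$, which is bounded by $\|\phi\|_{V'}$ times the previous estimate.

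The only real obstacle is justifying the exchange of integral and tensor-product operators in the identity above. It is routine once one notes that every operator involved is a finite sum of point evaluations times polynomials. Consequently the main analytic content is entirely inherited from Theorem~\ref{thm:coll-approx-X}.
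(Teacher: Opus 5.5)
Your proposal is correct and follows the paper's intended argument. The paper only points to \cite[Corollary 4.1]{Dung21}, but your reduction (writing $Q_{\Lambda(\xi_n)}u=\int_{\RRip} I_{\Lambda(\xi_n)}u\,\rd\blambda$, bounding the quadrature error by $\|u-I_{\Lambda(\xi_n)}u\|_{L_1(\RRip,V;\blambda)}\le\|u-I_{\Lambda(\xi_n)}u\|_{L_2(\RRip,V;\blambda)}$, invoking Theorem~\ref{thm:coll-approx-X}, and pulling $\phi$ through the Bochner integral for the functional bound) is exactly the argument the paper writes out for the least-squares quadrature in part (ii) of Theorem~\ref{thm: LS-sampling}.
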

\begin{proof}
	This theorem can be proven similarly to \cite[Corollary 4.1]{Dung21}  for functions in the Bochner space $L_2(\RRi,X;\bgamma)$.
	\hfill
\end{proof}	

\subsection{Extended least squares  approximation}

We analyze the convergence rate of the extended least-squares sampling algorithms for the solution $u(\by)$ to  equation \eqref{SPDE}--\eqref{lognormal}($\operatorname{\Gamma}$).
Let the assumptions of either Theorem~\ref{thm: summability-ell_p-rho} or  
Theorem~\ref{thm: summability-ell_p-b-L} hold for $0<p<2$. These theorems state that one can construct a set 
$\bsigma=(\sigma_\bs)_{\bs \in \FF}$, and a constant $M$  such that	
\begin{equation} \label{ell_2-summability}
	\left(\sum_{\bs\in\FF} (\sigma_\bs \|u_\bs\|_V)^2\right)^{1/2} \ \le M^{1/2} \ <\infty, \ \ \text{with} \ \
	\norm{\bsigma^{-1}}{\ell_q(\FF)} \le M^{1/q} < \infty,
\end{equation}	
where the set $\bsigma$ and constant $M$ depend on $a, \bb, p, \theta,  \lambda$ only. 
Provided this weighted $\ell_2$-summability,  the series \eqref{GPCexpansion} converges  unconditionally in 
$L_2(\RRip,V;\blambda)$ to $u$. This unconditional convergence can be established in the same way as the proof of \cite[Lemma 3.1]{Dung22}.  Hence, putting  $U:= \RRip$, $\sigma_j:=\sigma_{\bs_j}$, \ $\varphi_j:=L_{\bs_j}$ and $u_j:=u_{\bs_j}$,  by \eqref{ell_2-summability} we can reorder the countable set $\FF$ as $\FF = (\bs_j)_{j \in \NN}$ so that the sequence 
$(\sigma_{\bs_j})_{j \in \NN}$ is non-decreasing, and the weak solution $u$ is represented by the series
\begin{equation*} 
	u = \sum_{j \in \NN}  u_j\, \varphi_j,
\end{equation*}
with
\begin{equation} \label{w-summability}
	\left(\sum_{j \in \NN} (\sigma_j \|u_j\|_V)^2\right)^{1/2} \ \le M^{1/2}, \ \ \text{with} \ \
	\norm{\brac{\sigma_j^{-1}}_{j \in \NN}}{\ell_q(\NN)} \le M^{1/q}.
\end{equation}
Notice that $u(\by)$ is well-defined for every $\by\in U_0$, where $U_0$ is the set defined as 
 in \eqref{U_0} for $U=\RRip$. By Lemma \ref{lemma:EE}, the set $U_0$ has full measure, i.e., $\blambda(U_0)=1$.

Let us construct an extension of a least squares approximation in the space $L_2(\RRip,\CC;\blambda)$ to a space $L_2(\RRip,X;\blambda)$ (for detail in the general setting, see \cite{BD2024}).
For $n,m\in\NN$ with $n\ge m$, let $\by_1, \dots, \by_{n}\in U_0$ be points, $\omega_1, \dots, \omega_{n}\ge 0$ be weights, and $V_m = \operatorname{span}\{\varphi_{j}\}_{j=1}^{m}$ the subspace spanned by the functions $\varphi_{j}$, $j=1,...,m$.
The weighted least squares approximation $S_{n}^{\CC} f = S_{n}^{\CC}(\by_1, \dots, \by_{n}, \omega_1, \dots, \omega_{n}, V_m) f$ of a function $f\colon \RRip\to\CC$ is given by
\begin{equation} \label{least-squares-sampling1}
	S_{n}^{\CC} f
	= \underset{{g\in V_m}}{\operatorname{arg\,min}} \sum_{i=1}^{n} \omega_i |f(\by_i) - g(\by_i)|^2 .
\end{equation}
For every $n \in \NN$, let
\begin{equation*} \label{S_nCC}
	S_{n}^\CC f : = \sum_{i=1}^{n} f(\by_i) h_i, 
\end{equation*}	
be the least squares sampling algorithm constructed by 
\eqref{least-squares-sampling1}  for these sample points and weights, where 
$h_1,...,h_{n} \in L_2(\RRip,\CC;\blambda)$.
We define the extension of this least squares algorithm to the Bochner space $L_2(\RRip,X;\blambda)$
by replacing $f\in L_2(\RRip,\CC;\blambda)$ with $v\in L_2(\RRip,X;\blambda)$:
\begin{equation*} \label{S_nX}
	S_{n}^X v
	:= 	
	S_n^X(\by_1, \dots, \by_n, \omega_1, \dots, \omega_n, V_m) v
	:= \sum_{i=1}^{n} v(\by_i) h_i.
\end{equation*}
As the least squares approximation is a linear operator, worst-case error bounds carry over from the usual Lebesgue space $L_2(\RRip,\CC;\blambda)$ to the Bochner space $L_2(\RRip,X;\blambda)$.
We define $	\tilde{S}_n^X$ by
	\begin{equation*}\label{tilde{S}_n^X:=}
	\tilde{S}_n^X v
	:= 	
S_n^X(\by_1, \dots, \by_n, \omega_1, \dots, \omega_n, V_m) v
		:= \sum_{i=1}^{n} v(\by_i) h_i
	, \ \  \text{for} \ \
	m:=\Big\lceil \frac{n}{43200}
	\Big\rceil. 
\end{equation*}
\begin{theorem} \label{thm: LS-sampling}
	Let the assumptions of either Theorem~\ref{thm: summability-ell_p-rho} or  
	Theorem~\ref{thm: summability-ell_p-b-L} hold for $0<p<2$. 	Then there exist a constant $C$ such that for all $n \in \NN$,
	\begin{itemize}
		\item[{\rm (i)}] 
	\begin{equation*}\label{u - S_{n}^V u-L}
		\norm{u - \tilde{S}_n^V u}{L_2(\RRip,V;\blambda)}
		\ \le CM n^{-(1/p -1/2)};
	\end{equation*}	
	\item[{\rm (ii)}] 
	moreover, for the quadrature 
	\begin{equation*} \label{S_nV-L}
		\tilde{Q}_n^V u
		= \sum_{i=1}^n w_i u(\by_i), \quad   w_i:= \int_{\RRip}  \tilde{h}_i(\by) \rd \blambda(\by),
	\end{equation*}
	it holds
	\begin{equation} \label{u-Q-LS}
		\left\|\int_{\RRip}u(\by)\, \rd \blambda(\by ) - \tilde{Q}_n^V u\right\|_V
		\ \le \
		CM n^{-(1/p -1/2)},
	\end{equation}
	and, if  additionally, $\phi \in V'$ be a bounded linear functional on $V$, 
	\begin{equation} \label{u-Q_Lambda u-phi-L}
		\left|\int_{\RRip} \langle \phi, u (\by) \rangle\, \, \rd 
		\blambda(\by ) - \left\langle \phi,  \tilde{Q}_n^V  u \right\rangle \right|
		\ \le \
		C\norm{\phi}{V'} n^{-(1/p -1/2)}.
	\end{equation}
		\end{itemize}			
	\end{theorem}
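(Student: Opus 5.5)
The approach is to treat $\tilde S_n^V$ as the Bochner-space extension of a universal weighted least-squares algorithm. For that algorithm there is a known worst-case $L_2$-error bound in terms of the tail of the expansion in the ordered basis $(\varphi_j)$. Combining this bound with the weighted summability \eqref{w-summability} then gives the rate. Concretely, I would invoke the result of \cite{BD2024}, which extends to Bochner spaces the scalar bound of Dolbeault, Krieg and Ullrich. For $m=\lceil n/43200\rceil$ there exist points $\by_1,\dots,\by_n$ and weights $\omega_i$ (the points can be taken in $U_0$, since $\blambda(U_0)=1$ and the construction is random or by subsampling from a full-measure set) such that for every $v\in L_2(\RRip,V;\blambda)$ with $v=\sum_j v_j\varphi_j$ converging unconditionally,
\[
\|v-\tilde S_n^V v\|_{L_2(\RRip,V;\blambda)}^2 \le C\,\frac{1}{m}\sum_{k\ge m}\frac{1}{k}\sum_{j\ge k}\|v_j\|_V^2 .
\]
The alternative form would be $C\sup_{k\ge m}\ldots$. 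This is the step carrying the analytic weight, but it is a black box from the literature. The linearity of $S_n$ is what lets the scalar worst-case bound transfer to $V$-valued functions, as remarked before the theorem.

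Next I bound the tail $\sum_{j\ge k}\|u_j\|_V^2$ using \eqref{w-summability}. Since $(\sigma_j)$ is non-decreasing and $\|(\sigma_j^{-1})\|_{\ell_q}\le M^{1/q}$, we have $k\sigma_k^{-q}\le\sum_{j\le k}\sigma_j^{-q}\le M$, so $\sigma_k^{-1}\le M^{1/q}k^{-1/q}$. Hence
\[
\sum_{j\ge k}\|u_j\|_V^2\le \sigma_k^{-2}\sum_{j\ge k}\sigma_j^2\|u_j\|_V^2\le M^{2/q}k^{-2/q}M=M^{1+2/q}k^{-2/q}.
\]
Inserting this into the least-squares bound gives $\frac1m\sum_{k\ge m}k^{-1-2/q}\lesssim m^{-1-2/q}$. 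Using $1/q=1/p-1/2$, this yields an error of order $m^{-1/2-1/q}=m^{-1/p+1}$. That is better than claimed, so even the cruder $\sup$-type bound, giving the order $m^{-1/q}=m^{-(1/p-1/2)}$, suffices. I will use whichever version \cite{BD2024} states. In either case, since $m\asymp n$, we get $\|u-\tilde S^V_n u\|\le C M' n^{-(1/p-1/2)}$. The powers of $M$ can be absorbed into the constant or tracked, since $M$ is fixed. This proves (i).

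For (ii), note that $\int_{\RRip}\tilde S_n^V u\,\rd\blambda=\sum_i u(\by_i)\int \tilde h_i\,\rd\blambda=\tilde Q_n^V u$. Therefore, by the Cauchy–Schwarz inequality (Bochner integral against a probability measure),
\[
\Big\|\int u\,\rd\blambda-\tilde Q_n^V u\Big\|_V\le \|u-\tilde S_n^V u\|_{L_1(\RRip,V;\blambda)}\le \|u-\tilde S_n^V u\|_{L_2(\RRip,V;\blambda)}.
\]
This gives \eqref{u-Q-LS}. Applying $\phi$ and using $|\langle\phi,w\rangle|\le\|\phi\|_{V'}\|w\|_V$ gives \eqref{u-Q_Lambda u-phi-L}.

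The main obstacle is the first step: ensuring that the cited least-squares theorem applies in this setting. It requires an orthonormal system indexed by the reordered $\FF$, unconditional convergence (supplied as in \cite[Lemma 3.1]{Dung22}), and sample points lying in $U_0$, where $u$ is pointwise defined. It also requires the constant $43200$ to match the oversampling in that theorem.
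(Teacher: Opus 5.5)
Your overall route is the paper's. For (i), the paper inserts \eqref{w-summability} into the least-squares bound of \cite[Corollary 2.1]{BD2024}. Your proof of (ii) matches the paper's exactly: integrate $u-\tilde S_n^V u$, use $L_1\le L_2$, then $|\langle \phi,w\rangle|\le \norm{\phi}{V'}\norm{w}{V}$.

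The problem is the inequality you display as the black box for (i). It is not what the cited result says, and it is false. Take $v=w\varphi_{m+1}$ with $\norm{w}{V}=1$.
\begin{itemize}
\item Your right-hand side equals $\frac{C}{m}\big(\frac1m+\frac1{m+1}\big)\le 2C/m^2$.
\item On the other side, $\tilde S_n^V v=\sum_i v(\by_i)h_i$ with $h_i\in V_m$, so it is a $V$-valued combination of $\varphi_1,\dots,\varphi_m$.
\item By Parseval, $\norm{v-\tilde S_n^V v}{L_2(\RRip,V;\blambda)}^2\ge \sum_{j>m}\norm{v_j}{V}^2=1$.
\end{itemize}
The same observation shows that your ``better than claimed'' rate cannot hold. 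For every $u$ one has $\norm{u-\tilde S_n^V u}{L_2(\RRip,V;\blambda)}\ge\big(\sum_{j>m}\norm{u_j}{V}^2\big)^{1/2}$. For general coefficient sequences satisfying \eqref{w-summability}, this tail decays only like $m^{-1/q}$ up to logarithmic factors, which is far from $m^{-1/2-1/q}$. Incidentally, $m^{-1/2-1/q}=m^{-1/p}$, not $m^{-1/p+1}$; the latter would be \emph{worse} than the target $m^{-(1/p-1/2)}$, not better. Your unspecified $\sup$-variant has the same defect: the cited bound is not of an unweighted-tail form with a point set fixed independently of the decay class.

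What the Dolbeault--Krieg--Ullrich-type result behind \cite[Corollary 2.1]{BD2024} gives (note the constant $43200$) is a worst-case bound over the weighted space $H_{\bsigma}:=\{v:\sum_j\sigma_j^2\norm{v_j}{V}^2<\infty\}$. Its embedding into $L_2(\RRip,V;\blambda)$ has singular numbers $\sigma_j^{-1}$. The points $\by_i\in U_0$ and weights $\omega_i$ are built from $\bsigma$, which is known a priori; the sampling density involves $\sum_{k\ge m}\sigma_k^{-2}|\varphi_k(\by)|^2$. The bound reads
\begin{equation*}
\norm{v-\tilde S_n^V v}{L_2(\RRip,V;\blambda)}\le C\Big(\sum_{j\in\NN}\sigma_j^2\norm{v_j}{V}^2\Big)^{1/2}\Big(\frac1m\sum_{k\ge m}\sigma_k^{-2}\Big)^{1/2}.
\end{equation*}
Your correct estimate $\sigma_k^{-1}\le M^{1/q}k^{-1/q}$ then gives, for $q\le 2$,
\begin{equation*}
\sum_{k\ge m}\sigma_k^{-2}\le \sigma_m^{q-2}\sum_{k\ge m}\sigma_k^{-q}\le M^{2/q}m^{1-2/q},
\end{equation*}
hence an error of at most $CM^{1/2+1/q}m^{-1/q}$. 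With $m\asymp n$ this is (i).

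This step needs $\sum_k\sigma_k^{-2}<\infty$, which \eqref{w-summability} guarantees only for $q\le 2$, i.e.\ $p\le 1$. For $1<p<2$ the right-hand side may be infinite. For that part of the stated range you would have to check exactly what the cited corollary assumes; neither your sketch nor the paper's one-line citation addresses this.
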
	
	
	\begin{proof}
		Claim (i) is directly derived from \cite[Corollary 2.1]{BD2024} and the weighted 
		$\ell_2$-summability~\eqref{w-summability}. Let us prove claim (ii). From the equality 
\begin{equation*} 
\int_{\RRip}u(\by)\, \rd \blambda(\by ) - \tilde{Q}_n^V u
\ = \
\int_{\RRip}\brac{u(\by)- \tilde{S}_n^V u(\by)}\rd \blambda(\by ), 
\end{equation*}	
and the claim (i)	it follows that
\begin{equation*} 
\begin{aligned}
	 	\left\|\int_{\RRip}u(\by)\, \rd \blambda(\by ) - \tilde{Q}_n^V u\right\|_V
	 \ &\le \	
	 \norm{u - \tilde{S}_n^V u}{L_1(\RRip,V;\blambda)}
	 \\
	  & \le \ 
	 \norm{u - \tilde{S}_n^V u}{L_2(\RRip,V;\blambda)}
	 \ \le \ 
	 CM n^{-(1/p -1/2)},
\end{aligned}
\end{equation*}
which proves \eqref{u-Q-LS}. In a similar way, we can establish \eqref{u-Q_Lambda u-phi-L} by using the equality
\begin{equation*} 
\int_{\RRip} \langle \phi, u (\by) \rangle\, \, \rd 
\blambda(\by ) - \left\langle \phi,  \tilde{Q}_n^V  u \right\rangle 
	\ = \
	\int_{\RRip} \left\langle \phi, u(\by)- \tilde{S}_n^V u(\by) \right\rangle \, \, \rd \blambda(\by ).
\end{equation*}	
		\hfill
	\end{proof}
	
		\begin{remark} \label{remark3.1}
		{\rm
	The convergence rate of the approximation of $u$ by least squares sampling algorithms  (Theorem~\ref{thm: LS-sampling}) is markedly superior to that of the approximation of $u$ by sparse-grid polynomial interpolations (Theorem~\ref{thm:coll-approx-X}) by a factor of $n^{-1/2}$. The former matches the optimal  rate of best $n$-term polynomial approximation of $u$,  which can be shown via  Stechkin's lemma (see, e.g., \cite[Section 3]{CoDe15a}).
	}	
\end{remark}

\section{Parametric PDEs with Gaussian log-normal inputs}
\label{Parametric PDEs with Gaussian log-normal inputs}

In this section, we present a counterpart of our  method for  parametric elliptic PDEs with Gaussian log-normal inputs.  Throughout the section, we denote by $u$ or   $u(\by)$  the weak solution  to equation \eqref{SPDE}--\eqref{lognormal}(G).

Let $\gamma$ be the standard Gaussian probability measure on $\RR$ 
with the density 
\begin{equation*} \label{g}
	g(y):=\frac 1 {\sqrt{2\pi}} e^{-y^2/2} .
\end{equation*}
Let $(H_k)_{k\in \NN_0}$ be the sequence of Hermite polynomials on $\RR$ 
normalized with respect to the measure $\gamma$, i.e., 
$$
\int_{\RR} |H_k(y)|^2 \rd \gamma(y) =\int_{\RR} |H_k(y)|^2 g(y) \rd y =1, \ \
k\in \NN_0.
$$

The the Bochner space  $L_2(\RRi,X;\bgamma)$ and the associated infinite tensor product standard Gaussian measure $\bgamma(\by)$ on $\RRi$ can be defined in the same fashion as the Bochner space  $L_2(\RRip,X;\blambda)$ and the measure $\blambda(\by)$.

A function $v \in L_2(\RRi,X;\bgamma)$ can be represented by the Gaussian GPC expansion
\begin{equation*} \label{GPCexpansion-G}
	v(\by)=\sum_{\bs\in\FF} v_\bs \,H_\bs(\by), \quad v_\bs \in X,
\end{equation*}
with convergence in $L_2(\RRi,X;\bgamma)$, where
\begin{equation*}
	H_\bs(\by)=\bigotimes_{j \in \NN}H_{s_j}(y_j),\quad 
	v_\bs:=\int_{\RRi} v(\by)\,H_\bs(\by)\, \rd\bgamma (\by), \quad 
	\bs \in \FF.
\end{equation*}

It is known that, the Hermite polynomials $\big(H_k\big)_{k\in \NN_0}$ are nontrivial solution to
Hermite's (linear second-order) differential equation
\begin{equation*} \label{Dd-Hermite} 
	\Dd u = k u, \quad \text{where} \quad \Dd   := -  \frac{\rd^2  }{\rd y^2} + y \frac{\rd  }{\rd y}
\end{equation*}
(see, e.g., \cite[5.5.2]{Szego1939}).
This means that 
\begin{equation*} \label{bb-H} 
	\Dd   H_k   = H_k,  \ \ k \in \NN_0. 
\end{equation*}
Analogously to the Laguerre polynomials, it holds the equality 
\begin{equation*} \label{Dd2-H} 
	\Dd u = - e^{y^2/2} \frac{\rd }{\rd  y} \brac{e^{-y^2/2}  \frac{\rd u}{\rd y}}.
\end{equation*}

For a given finite set $J \subset \NN$ and $r \in\NN$, we define the product differential operator 
\begin{equation*} \label{Dd^r_J-gamma} 
	\Dd^r_J  := \brac{\Dd_J}^r, \quad 
	\Dd_J  := (-1)^{|J|} \prod_{j \in J} e^{y_j^2/2}
	\frac{\rd}{\rd y_j} \brac{e^{-y_j^2/2}  \frac{\rd}{\rd y_j}}.
\end{equation*}
Notice that there exist polynomials $q_j(y)$, $j = 1,...,2r$, of degree at most $r$  such that 
	\begin{equation*} \label{Dd^r-gamma} 
		\Dd^r 
		= \sum_{j=1}^{2r}	q_j(y) \frac{\rd^j }{\rd y^j},
	\end{equation*}
	and the coefficients of  $q_j(y)$ depend on $r$ only.	Hence, there exists a constant $C_{r}$ such that
	\begin{equation} \label{q_j} 
	|q_j(y)|  \le C_{r}(1 + |y|)^r, \ \ j = 1,\ldots,2r.
	\end{equation}		

Hence, in a way similar to the proof of Lemma~\ref{lemma:Dd^r_J v =Laguerre} we can prove
\begin{lemma} \label{lemma:Dd^r_J v =Hermite}
		If $v \in L_2(\RRi,V;\bgamma)$ and $\Dd_J^r v \in L_2(\RRi,V;\bgamma)$, then we have that
	\begin{equation*} \label{identity1-H} 
 \Dd^r_J v = \sum_{\bs \in \FF} \nu_{J,\bs}^r v_\bs H_\bs, 
	\end{equation*}
 where $\nu_{J,\bs}$ is defined in \eqref{nu_J}. 
\end{lemma}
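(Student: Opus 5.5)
The plan is to mirror the proof of Lemma~\ref{lemma:Dd^r_J v =Laguerre}, replacing the gamma density $l_a$ by the Gaussian density $g$ and the Laguerre operator by the Hermite operator.

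\emph{Step 1: the univariate identity.} First we prove that if $v \in L_2(\RR,V;\gamma)$ and $\Dd v \in L_2(\RR,V;\gamma)$, then $\Dd v = \sum_{s\in\NN_0} s\, v_s H_s$. The eigenrelation to use is $\Dd H_s = s H_s$ (the paper's display $\Dd H_k = H_k$ should read $k H_k$). Together with the divergence form $\Dd = -e^{y^2/2}\frac{\rd}{\rd y}\big(e^{-y^2/2}\frac{\rd}{\rd y}\big)$ this gives
\begin{equation*}
s v_s = \int_{\RR} v\, \Dd H_s \, g\,\rd y = -\frac{1}{\sqrt{2\pi}}\int_{\RR} v \,\frac{\rd}{\rd y}\Big(e^{-y^2/2} H_s'\Big)\rd y .
\end{equation*}
Integrating by parts twice moves both derivatives onto $v$, and we obtain
\begin{equation*}
s v_s = \int_{\RR} (\Dd v)\, H_s \,\rd\gamma ,
\end{equation*}
which is the $s$-th Hermite coefficient of $\Dd v$. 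Since $\Dd v \in L_2(\RR,V;\gamma)$, Parseval's identity and the expansion then yield the univariate claim.

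\emph{Step 2: tensorization.} Next we pass to $\Dd_J$ and its powers. Because $\Dd_J$ is a product of commuting operators, each acting in a single variable $y_j$, $j\in J$, we apply Step 1 in each coordinate $y_j$, $j \in J$, with the other variables frozen. The coefficient of $H_\bs$ in $\Dd_J v$ then equals $\prod_{j\in J} s_j\, v_\bs = \nu_{J,\bs} v_\bs$. Iterating $r$ times, and using the fact that $\Dd_J^k v$ stays in $L_2$ at each stage, gives $\nu_{J,\bs}^r v_\bs$.

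\emph{Main obstacle.} The delicate point is justifying the integration by parts: the boundary terms $e^{-y^2/2} v H_s'$ and $e^{-y^2/2} v' H_s$ must vanish at $\pm\infty$, and the intermediate quantities $\Dd_J^k v$ for $k<r$ must be in $L_2$. As in the Laguerre case, this will be handled by interpreting $\Dd v$ in the weak (distributional) sense. The defining identity is $\int (\Dd v)\varphi\,\rd\gamma = \int v\,\Dd\varphi\,\rd\gamma$ for polynomial test functions $\varphi$. With that interpretation the computation in Step 1 is exactly the definition, and it sidesteps the boundary terms.

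For the tensor step we can alternatively avoid intermediate regularity by testing directly against $H_\bs$. Since $\Dd_J^r$ is formally symmetric in $L_2(\bgamma)$ and $\Dd_J^r H_\bs = \nu_{J,\bs}^r H_\bs$, we get
\begin{equation*}
\int \Dd_J^r v \, H_\bs \,\rd\bgamma = \int v\, \Dd_J^r H_\bs \,\rd\bgamma = \nu_{J,\bs}^r v_\bs ,
\end{equation*}
which gives the statement at once.
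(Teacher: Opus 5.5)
Your proposal is correct and follows the paper's route, which is to mirror the proof of Lemma~\ref{lemma:Dd^r_J v =Laguerre}: the univariate identity via the eigenrelation $\Dd H_s = sH_s$ (you are right that the paper's display should read $kH_k$), the divergence form of $\Dd$ and two integrations by parts against the Gaussian density, then a coordinatewise tensor-product argument. Your remarks on the vanishing of boundary terms and on a weak (duality) interpretation of $\Dd_J^r v$ address a point the paper passes over silently, and they do not change the argument.
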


We recall the following result  proven in \cite[Theorem 2.2]{BCDM17}.

\begin{lemma} \label{lemma:exp(k|b|)<}
Assume that 
	\begin{equation*}\label{AssumA}
		\text{$\exists$ $(\rho_j)_{j\in \NN}$, $\rho_j >0$: \
			$\sum_{j \in \NN} \rho_j|\psi_j|$  converges  in  $L_\infty(D)$  and	\ \
			$\sum_{j \in \NN} \exp\big(-\rho_j^2\big) < \infty$.} 
	\end{equation*}	
Then for any $k\in \NN$
$$
\int_{\RRi} \exp\big(k\|b(\by)\|_{L_\infty(D)}\big)\rd\bgamma(\by) <\infty.
$$
\end{lemma}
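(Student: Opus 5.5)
The statement is the Gaussian moment bound of \cite[Theorem 2.2]{BCDM17}. I would reduce it to a tail estimate for $\|b(\by)\|_{L_\infty(D)}$ by comparing $b$ pointwise in $D$ with a quantity controlled by the weights $\rho_j$. Put
\[
\sigma := \Bigl\|\sum_{j\in\NN}\rho_j|\psi_j|\Bigr\|_{L_\infty(D)} < \infty .
\]
For a.e.\ $\bx\in D$,
\[
|b(\by)(\bx)| \le \sum_{j}\rho_j|\psi_j(\bx)|\,\frac{|y_j|}{\rho_j}
\le \sigma \sup_{j}\frac{|y_j|}{\rho_j}.
\]
Hence $\|b(\by)\|_{L_\infty(D)}\le \sigma\, Z(\by)$ with $Z(\by):=\sup_j |y_j|/\rho_j$. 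It then suffices to show that $\exp(k\sigma Z)$ is $\bgamma$-integrable for every $k$.

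\emph{Tail estimate.} The $y_j$ are i.i.d.\ standard Gaussians, so a union bound gives
\[
\bgamma(Z>t)\le \sum_j \bgamma(|y_j|>t\rho_j)\le \sum_j 2e^{-t^2\rho_j^2/2}.
\]
For $t\ge \sqrt2$ we have $e^{-t^2\rho_j^2/2}=\bigl(e^{-\rho_j^2}\bigr)^{t^2/2}$. Since $\sum_j e^{-\rho_j^2}<\infty$, we get $\rho_j\to\infty$, so only finitely many $\rho_j$ lie below any fixed level; let $\rho_*:=\inf_j\rho_j>0$. Now split $\sum_j (e^{-\rho_j^2})^{t^2/2}$:
\[
\sum_j \bigl(e^{-\rho_j^2}\bigr)^{t^2/2}\le e^{-\rho_*^2(t^2/2-1)}\sum_j e^{-\rho_j^2}.
\]
This uses $e^{-\rho_j^2 s}\le e^{-\rho_j^2}\,e^{-\rho_*^2(s-1)}$ for $s\ge1$. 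It yields
\[
\bgamma(Z>t)\le C e^{-c t^2}\quad\text{for } t\ge \sqrt2, \qquad c=\rho_*^2/2 .
\]

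\emph{Integration.} By the layer-cake formula,
\[
\int \exp(k\sigma Z)\,\rd\bgamma = 1+\int_0^\infty k\sigma e^{k\sigma t}\,\bgamma(Z>t)\,\rd t.
\]
Bounding $\bgamma(Z>t)$ by $1$ on $[0,\sqrt2]$ and by $Ce^{-ct^2}$ beyond, this is finite for every $k$, because Gaussian decay dominates $e^{k\sigma t}$.

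\emph{Main obstacle.} The main obstacle is a measurability and well-definedness point rather than the estimate itself. We need $Z<\infty$ $\bgamma$-a.e.; this follows from the tail bound, or directly from Borel--Cantelli, since $\sum_j\bgamma(|y_j|>\rho_j)<\infty$. We also need $\by\mapsto\|b(\by)\|_{L_\infty(D)}$ to be measurable, or to be handled as an outer integral. It suffices that $\|b(\by)\|_{L_\infty(D)}\le\sigma Z(\by)$ on the full-measure set where $Z<\infty$, that $Z$ is measurable as a countable supremum, and that the partial sums of $b$ converge in $L_\infty(D)$ there, so that $\|b(\by)\|$ is a limit of measurable functions.
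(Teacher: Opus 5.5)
Your proof is correct. The paper gives no argument for this lemma; it only quotes \cite[Theorem 2.2]{BCDM17}. Your proposal is therefore a self-contained replacement for that citation, not a different route from a proof in the paper.

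The pointwise domination $\|b(\by)\|_{L_\infty(D)}\le\sigma\sup_j|y_j|/\rho_j$ is exactly what the hypothesis on $\sum_j\rho_j|\psi_j|$ is designed to give. The union bound for $t\ge\sqrt2$ is precisely where $\sum_j e^{-\rho_j^2}<\infty$ enters. Your argument also proves more than the statement: it gives the sub-Gaussian tail $\bgamma(\|b(\by)\|_{L_\infty(D)}>s)\le Ce^{-cs^2/\sigma^2}$ for $s\ge\sqrt2\,\sigma$. Hence $\exp(\varepsilon\|b(\by)\|_{L_\infty(D)}^2)$ is integrable for small $\varepsilon>0$, which is why every exponential moment is finite at once. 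Your measurability remarks are adequate, since each $\|\sum_{j\le N}y_j\psi_j\|_{L_\infty(D)}$ is a continuous function of $(y_1,\dots,y_N)$.

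There is one small slip, in your last paragraph only. The Borel--Cantelli alternative with threshold $\rho_j$ does not work as stated. The probability $\bgamma(|y_j|>\rho_j)$ is of order $e^{-\rho_j^2/2}/\rho_j$, and $\sum_j e^{-\rho_j^2}<\infty$ does not make this summable; take $\rho_j^2=\log j+2\log\log j$ for large $j$. Either use the threshold $\sqrt2\,\rho_j$, for which $\sum_j\bgamma(|y_j|>\sqrt2\,\rho_j)\le 2\sum_j e^{-\rho_j^2}<\infty$, or rely on your tail bound, which already gives $\bgamma(Z=\infty)=0$.
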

	We put
\begin{equation*}
	\tilde{A}_r(J):= \brac{\int_{\RRi}\prod_{j \in J} (1 + y_j)^{2r} 
		\exp\Big(2 \|b(\by)\|_{L_\infty(D)}\Big) \rd \bgamma(\by)}^{1/2}.
\end{equation*}
Under the assumption of Lemma \ref{lemma:exp(k|b|)<},  by H\"older's inequality we deduce that
\begin{equation} \label{A_r(J)^4}
\tilde{A}_r(J)^4 
\ \leq \
\int_{\RRi}\exp\big(4\|b(\by)\|_{L_\infty(D)}\big)\rd\bgamma(\by) \brac{\int_{\RR} (1+y)^{4r}\rd\gamma(y)}^{|J|} .
\end{equation}

The following result can be established analogously, with appropriate modifications to the proof of Theorem \ref{thm: summability-ell_p-rho} by using, in particular, inequality \eqref{A_r(J)^4}.

\begin{theorem} \label{thm: summability-ell_p-rho-H}
	Let  $0< p < \infty$, $\brho=(\rho_j)_{j\in \NN}$ be a positive sequence satisfying  condition~\eqref{<kappa}, and $\brho^{-1}\in \ell_p(\NN)$. 	
	Then $\brac{\norm{u_\bs}{V}}_{\bs\in \FF}\in \ell_p(\FF)$.	
	
	Moreover, if in addition, $p < 2$, for any fixed $\theta, \lambda \ge 0$,  we can construct a set 
	$\bsigma=(\sigma_\bs)_{\bs \in \FF}$  with positive $\sigma_\bs$, and a constant $M$ such that	
	\begin{equation*} \label{ell_2-summability-rho-H}
		\left(\sum_{\bs\in\FF} (\sigma_\bs \|u_\bs\|_V)^2\right)^{1/2} \ \le M^{1/2} \ <\infty, \ \ \text{with} \ \
		\norm{\bp(\theta,\lambda)\bsigma^{-1}}{\ell_q(\FF)} \le M^{1/q} < \infty,
	\end{equation*}	
	where $q := 2p/(2-p)$.  The set $\bsigma$ and constant $M$ depend on $a,  p, \theta, \lambda, \kappa$ only. 
\end{theorem}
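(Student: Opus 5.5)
The argument follows the proof of Theorem~\ref{thm: summability-ell_p-rho} line by line. Only two ingredients change: Lemma~\ref{lemma:Dd^r_J v =Laguerre} is replaced by Lemma~\ref{lemma:Dd^r_J v =Hermite}, and the growth assumption \eqref{A_r(J)<} on $A_r(J)$ is replaced by the bound \eqref{A_r(J)^4} on $\tilde{A}_r(J)$.

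\textbf{Step 1: the integral in Lemma~\ref{lemma:exp(k|b|)<} is finite.} Since $\brho^{-1}\in\ell_p(\NN)$, we have $\rho_j\to\infty$ and $\sum_j\rho_j^{-p}<\infty$. Because $e^{-t^2}\le C_p t^{-p}$ for $t>0$, this gives $\sum_j \exp(-\rho_j^2)<\infty$. Condition \eqref{<kappa} gives convergence of $\sum_j\rho_j|\psi_j|$ in $L_\infty(D)$. Lemma~\ref{lemma:exp(k|b|)<} then applies with $k=4$, so
\[
E_4:=\int_{\RRi}\exp\big(4\|b(\by)\|_{L_\infty(D)}\big)\rd\bgamma(\by)<\infty .
\]
Put $\tilde K_r:=\big(\int_\RR(1+|y|)^{4r}\rd\gamma(y)\big)^{1/4}$. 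Inequality \eqref{A_r(J)^4}, read with $|y_j|$ in place of $y_j$, yields
\[
\tilde A_r(J)\le E_4^{1/4}\tilde K_r^{|J|}.
\]
This is \eqref{A_r(J)<} up to the harmless prefactor $E_4^{1/4}$, which is absorbed into $C_0$. Here $\tilde A_r(J)$ must be defined with $(1+|y_j|)^{2r}$, to match \eqref{q_j}.

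\textbf{Step 2: coefficient bound.} The finiteness of $E_4$ shows $\exp(\|b(\cdot)\|_{L_\infty(D)})\in L_2(\RRi;\bgamma)$. Together with Lemma~\ref{lemma:|partial^{bs}u(by)|_V<}, this shows that $u$ and $\Dd_J^r u$ belong to $L_2(\RRi,V;\bgamma)$. Lemma~\ref{lemma:Dd^r_J v =Hermite} and Parseval then give
\[
\int\|\Dd^r_Ju\|_V^2\,\rd\bgamma=\sum_\bs\nu_{J,\bs}^{2r}\|u_\bs\|_V^2 .
\]
Next, expand $\Dd^r_J$ with the polynomials $q_j$ of \eqref{q_j} and apply Lemma~\ref{lemma:|partial^{bs}u(by)|_V<} to every mixed derivative $\partial^\bk u$ with $\supp\bk=J$ and $|\bk|_\infty\le 2r$. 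Integrating against $\bgamma$ produces, for $\supp\bs=J$,
\[
\|u_\bs\|_V\le C_0E_4^{1/4}\,C_1^{|J|}\,\bs^{-r}\sum_{\supp\bk=J,\ |\bk|_\infty\le2r}\brho^{-\bk}=:\beta_\bs,
\]
with $C_1=\tilde K_rC_r(2r)!$.

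\textbf{Step 3: summability.} The $\ell_p$ estimate of $\bp(\theta',\lambda)\bbeta$ is identical to the Laguerre case. Choose $r=r_{p,\theta'}$ so that $\sum_{\supp\bs=J}\bs^{-p(r-\theta')}\le C_{p,\theta'}^{|J|}$. The remaining sum factorizes over $j$, and the resulting product is bounded by $\exp\big(K\sum_j(\sum_{\ell=1}^{2r}\rho_j^{-\ell})^p\big)$. This is finite because $\rho_j\to\infty$ with $\brho^{-1}\in\ell_p$, so that $\sum_{\ell}\rho_j^{-\ell}\le C\rho_j^{-1}$ for all but finitely many $j$, and the finitely many remaining terms are bounded. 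Taking $\theta'=0$ gives the $\ell_p$ claim. For $p<2$, set $\sigma_\bs=\beta_\bs^{p/2-1}$ and $\theta'=\theta q/p$; the weighted $\ell_2$ bound then follows exactly as before.

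\textbf{Main obstacle.} The only genuinely new point is Step 1. The Gaussian variables are unbounded in both directions, so the weight $\prod_{j\in J}(1+|y_j|)^{2r}$ cannot be paired with $\exp(2\|b\|_{L_\infty(D)})$ by a one-sided change of variables as in Lemma~\ref{lemma:A_r(J)}. Cauchy–Schwarz, i.e.\ \eqref{A_r(J)^4}, decouples the two factors: the exponential moment is controlled globally by Lemma~\ref{lemma:exp(k|b|)<}, and the polynomial weight factorizes over $J$. The second place that needs care is verifying the hypotheses of Lemma~\ref{lemma:exp(k|b|)<} from $\brho^{-1}\in\ell_p$, which Step 1 handles.
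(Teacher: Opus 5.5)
Your proposal follows the paper's own route. The paper proves this theorem by rerunning the proof of Theorem~\ref{thm: summability-ell_p-rho} with Lemma~\ref{lemma:Dd^r_J v =Hermite} in place of Lemma~\ref{lemma:Dd^r_J v =Laguerre}, and with the Cauchy--Schwarz bound \eqref{A_r(J)^4} in place of the hypothesis \eqref{A_r(J)<}; that is exactly your Steps 1--3. Defining $\tilde{A}_r(J)$ with $(1+|y_j|)^{2r}$ is a correct repair of the paper's notation.

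One claim in Step 1 is false as stated. Condition \eqref{<kappa} only bounds the pointwise sum $\sum_j \rho_j|\psi_j|$ in $L_\infty(D)$; it does not make the series converge in $L_\infty(D)$. For a counterexample, take disjoint sets $D_j\subset D$ of positive measure and $\psi_j=\kappa\rho_j^{-1}\bone_{D_j}$: then \eqref{<kappa} holds, but every tail has norm $\kappa$. So Lemma~\ref{lemma:exp(k|b|)<} cannot be invoked with $\brho$ itself. Apply it instead with $\tilde\rho_j:=\rho_j^{1/2}$:
\begin{itemize}
\item since $\rho_j\to\infty$, the partial sums are Cauchy in $L_\infty(D)$, because $\|\sum_{N<j\le M}\tilde\rho_j|\psi_j|\|_{L_\infty(D)}\le \kappa\sup_{j>N}\rho_j^{-1/2}\to 0$;
\item $\sum_j e^{-\tilde\rho_j^{2}}=\sum_j e^{-\rho_j}\le C_p\sum_j\rho_j^{-p}<\infty$ because $\brho^{-1}\in\ell_p(\NN)$.
\end{itemize}
With this substitution the rest of your argument goes through unchanged.
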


We have the following estimate for $\tilde{A}_r(J)$  in the case of arbitrary supports of $\psi_j$.

	\begin{lemma} \label{lemma:A_r(J)-2}
	Let  $\bb = \brac{b_j}_{j \in \NN}$ be defined by 
	$b_j:= \norm{\psi_j}{L_\infty(D)}$. Assume 
	$\bb\in \ell_1(\NN)$ and
	\begin{equation*}
		\norm{\bb}{\ell_\infty(\NN)} = b_0 < \infty.
	\end{equation*}
	Then 
	\begin{equation*}
		\tilde{A}_r(J)\le 
		K_{r,\bb}^{|J|} \exp\bigg(\|\bb\|_{\ell_2(\NN)}^2 +  \frac{\sqrt{2}}{\pi}\|\bb\|_{\ell_1(\NN)}\bigg),
	\end{equation*}
	where
	\begin{equation} \label{K_{r,bb}}
		K_{r,\bb}:= \brac{\int_{\RR}  \frac{(1+y)^{2r} }{\sqrt{2\pi}} \exp\bigg(2b_0|y| -\frac{|y|^2}{2}\bigg) \rd y}^{1/2} <\infty.
	\end{equation}
\end{lemma}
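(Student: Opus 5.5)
The plan is to copy the proof of Lemma~\ref{lemma:A_r(J)}, with the gamma density replaced by the Gaussian one. First I bound the exponential. Since $\|b(\by)\|_{L_\infty(D)}\le \sum_{j\in\NN} b_j|y_j|$ almost everywhere, the tensor structure of $\bgamma$ gives
\begin{equation*}
\tilde{A}_r(J)^2 \le \prod_{j\in J}\int_{\RR}(1+y)^{2r}\exp(2b_j|y|)\,\rd\gamma(y)\ \prod_{j\notin J}\int_{\RR}\exp(2b_j|y|)\,\rd\gamma(y).
\end{equation*}
I would justify interchanging the infinite product and the integral by monotone convergence on the finite truncations $\sum_{j\le N} b_j|y_j|$. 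This is legitimate because $\bb\in\ell_1(\NN)$ forces $\sum_j b_j|y_j|<\infty$ $\bgamma$-a.s.

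The factors with $j\in J$ are direct: $b_j\le b_0$ gives each of them the bound $K_{r,\bb}^2$, by the definition \eqref{K_{r,bb}}. Strictly speaking one should read $(1+|y|)^{2r}$ in \eqref{K_{r,bb}}, or note that $(1+y)^{2r}$ with even exponent is integrated against a symmetric weight, so the bound is harmless. Taking the square root, these factors contribute $K_{r,\bb}^{|J|}$.

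The main step is the factors with $j\notin J$, where I need
\begin{equation*}
\int_{\RR}e^{2b|y|}\,\rd\gamma(y)\le \exp\Big(2b^2+\tfrac{2\sqrt2}{\pi}\,b\Big) \quad \text{or some similar explicit bound.}
\end{equation*}
The exact value is $\int_{\RR} e^{2b|y|}\rd\gamma = 2e^{2b^2}\Phi(2b)$. Writing $\Phi(2b)=\tfrac12+\int_0^{2b}g$ and using $g\le 1/\sqrt{2\pi}$, I get $2\Phi(2b)\le 1+\tfrac{4b}{\sqrt{2\pi}}\le \exp\big(\tfrac{2\sqrt2}{\sqrt\pi}b\big)$. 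The constant produced this way is $\sqrt{2/\pi}\cdot 2$. The stated constant $\tfrac{\sqrt2}{\pi}$, after squaring, corresponds to a bound $\exp(2b^2+\tfrac{2\sqrt2}{\pi}b)$ for $\tilde A_r^2$. So I will have to track the constants carefully, and possibly use the sharper estimate $2\Phi(x)-1\le \sqrt{2/\pi}\,x$ with the right $x$. Reconciling the exact constant is the only delicate point.

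Finally I multiply over $j\notin J$, bound by the full product over $j\in\NN$, and sum the exponents to get $\exp\big(2\|\bb\|_{\ell_2}^2+c\|\bb\|_{\ell_1}\big)$ for $\tilde A_r(J)^2$. Taking square roots yields the claimed estimate.
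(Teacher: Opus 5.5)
Your argument follows the paper's proof step for step. You bound $\norm{b(\by)}{L_\infty(D)}$ by $\sum_j b_j|y_j|$ and factorize over coordinates. You dominate the factors $j\in J$ by $K_{r,\bb}^2$ via $b_j\le b_0$, and bound each factor $j\notin J$ by a Gaussian exponential-moment estimate. Finally you enlarge $\prod_{j\notin J}$ to $\prod_{j\in\NN}$ and take square roots. Your caveat about $(1+|y|)^{2r}$ is unnecessary. The same nonnegative factor $(1+y)^{2r}$ appears in both $\tilde A_r(J)$ and $K_{r,\bb}$, so only $e^{2b_j|y|}\le e^{2b_0|y|}$ is used.

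The point you leave open cannot be closed by tracking constants more carefully. The ``sharper estimate'' $2\Phi(x)-1\le\sqrt{2/\pi}\,x$ is exactly the one you already applied with $x=2b$. It is sharp to first order:
$$\log\int_{\RR}e^{c|y|}\,\rd\gamma(y)=\log\brac{2e^{c^2/2}\Phi(c)}=\sqrt{2/\pi}\,c+O(c^2)\quad\text{as } c\to0,$$
and $\sqrt2/\pi<\sqrt{2/\pi}$.

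The inequality $\int_\RR e^{b|y|}\rd\gamma(y)\le\exp\brac{b^2/2+\sqrt2\, b/\pi}$ that the paper quotes from \cite{BCDM17} must be a misprint for $\exp\brac{b^2/2+\sqrt{2/\pi}\,b}$. At $b=0.1$ its left side is about $1.085$ and its right side about $1.051$. Consequently the lemma as stated is false. Take $J=\emptyset$, $\psi_1\equiv\varepsilon$ and $\psi_j=0$ for $j\ge2$. Then $\tilde A_r(\emptyset)^2=2e^{2\varepsilon^2}\Phi(2\varepsilon)=1+2\sqrt{2/\pi}\,\varepsilon+O(\varepsilon^2)$. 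This exceeds the claimed bound $\exp\brac{2\varepsilon^2+\tfrac{2\sqrt2}{\pi}\varepsilon}=1+\tfrac{2\sqrt2}{\pi}\varepsilon+O(\varepsilon^2)$ for small $\varepsilon>0$.

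So finish with your own bound. It gives $\tilde A_r(J)\le K_{r,\bb}^{|J|}\exp\brac{\norm{\bb}{\ell_2(\NN)}^2+\sqrt{2/\pi}\,\norm{\bb}{\ell_1(\NN)}}$, which is the correct version of the statement. This exponential factor only enters later as a $J$-independent multiplicative constant, as in $C_1$ in the proof of Theorem~\ref{thm: summability-ell_p-b-L}. The correction therefore has no effect on Theorem~\ref{thm: summability-ell_p-b-H}.
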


\begin{proof}
Indeed,	by using the inequality
		\begin{equation*}
		\int_\RR \exp(b|y|)\rd \gamma(y) \le \exp\bigg(\frac{b^2}{2}+\frac{\sqrt{2}b}{\pi}\bigg),
	\end{equation*}
	(see, e.g.,  \cite[(38)]{BCDM17}),  we derive
	for any finite set $J\subset \NN$,
		\begin{equation*}
		\begin{aligned}
			\tilde{A}_r(J)^2
			&\leq  \int_{\RRi}\prod_{j \in J} (1 + y_j)^{2r} 
			\exp\Bigg(2 \sum_{j\in \NN}b_jy_j\Bigg) \prod_{j\in \NN}  \rd \gamma( y_j)
			\\
			&\leq \prod_{j\in J} \int_{\RR}  \frac{(1+y_j)^{2r} }{\sqrt{2\pi}} \exp\bigg(2b_0|y_j| -\frac{|y_j|^2}{2}\bigg) \rd y_j \prod_{j\not\in J} \int_{\RR} \exp\big(2b_j|y_j|  \big) \rd \gamma(y_j).
				\\
			&\leq K_{r,\bb}^{2|J|} \prod_{j\not\in J} 
			\exp\bigg(2b_j^2+\frac{2\sqrt{2}b_j}{\pi}\bigg)
			\leq K_{r,\bb}^{2|J|} \exp\bigg(2\|\bb\|_{\ell_2(\NN)}^2 +  \frac{2\sqrt{2}}{\pi}\|\bb\|_{\ell_1(\NN)}\bigg).
		\end{aligned}
	\end{equation*}	
	\hfill	
\end{proof}

In a manner analogous  to  the proof of 
Theorem~\ref{thm: summability-ell_p-b-L},  with certain modifications we can obtain the following result.

\begin{theorem} \label{thm: summability-ell_p-b-H}
		Let $0< p \le 1$ and  $\bb = \brac{b_j}_{j \in \NN}$ be defined by	$b_j:= \norm{\psi_j}{L_\infty(D)}$. 		Let $r=r_{p,0}\in \NN$ be a fixed number satisfying \eqref{C_{p,theta}}.
Assume  that $\bb\in \ell_p(\NN)$ and
		\begin{equation*} 
			\norm{\bb}{\ell_1(\NN)}	< K^{-1},
		\end{equation*}
		where 
	\begin{equation} \label{K-for-gamma}
			K:= e (2r)!C_{r}C_{p,0}K_{r,\bb}, 
		\end{equation}
		and the constants $C_{r}$,   $C_{p,0}$, $K_{r,\bb}$  are defined as in \eqref{q_j}, \eqref{C_{p,theta}}, \eqref{K_{r,bb}} respectively.
	Then $\brac{\norm{u_\bs}{V}}_{\bs \in \FF}\in \ell_p(\FF)$. 
	
	Moreover, 
for any fixed $\theta, \lambda \ge 0$,  if  $q := 2p/(2-p)$,  $\theta'=2\theta /(2-p)$, $r= r_{p,\theta'}$ in \eqref{C_{p,theta}} and $K$ in \eqref{K-for-gamma} is replaced by 
		\begin{equation*} \label{K-gamma}
			K:= e(2r)!C_{r}C_{p,\theta'}C_{\theta',\lambda}K_{r,\bb}
		\end{equation*}  
with $C_{p,\theta'}$ being defined as in \eqref{ineq-theta,lambda}, then	we can construct a set 
	$\bsigma=(\sigma_\bs)_{\bs \in \FF}$, and a constant $M$  such that	
	\begin{equation*} \label{ell_2-summability-H}
		\left(\sum_{\bs\in\FF} (\sigma_\bs \|u_\bs\|_V)^2\right)^{1/2} \ \le M^{1/2} \ <\infty, \ \ \text{with} \ \
		\norm{\bp(\theta,\lambda)\bsigma^{-1}}{\ell_q(\FF)} \le M^{1/q} < \infty,
	\end{equation*}	
	where  the set $\bsigma$ and constant $M$ depend on $\bb, p, \theta, \lambda$ only. 
\end{theorem}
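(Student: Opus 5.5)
We follow the proof of Theorem~\ref{thm: summability-ell_p-b-L} almost verbatim, replacing the Laguerre operator by the Hermite operator $\Dd$, the gamma measure by $\bgamma$, Lemma~\ref{lemma:Dd^r_J v =Laguerre} by Lemma~\ref{lemma:Dd^r_J v =Hermite}, the bound \eqref{p_j} by \eqref{q_j}, and Lemma~\ref{lemma:A_r(J)} by Lemma~\ref{lemma:A_r(J)-2}.

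\textbf{Step 1: derivative bound.} For each $\bk\in\FF$ we define $\rho_{\bk,j}:=k_j/(b_j|\bk|_1)$ for $j\in\supp(\bk)$ and $\rho_{\bk,j}:=0$ otherwise. Then $\|\sum_j\rho_{\bk,j}|\psi_j|\|_{L_\infty(D)}\le 1<\pi/2$. Lemma~\ref{lemma:|partial^{bs}u(by)|_V<}, which holds for all $\by\in\RRi$ with $b(\by)\in L_\infty(D)$, therefore gives
\[
\|\partial^\bk u(\by)\|_V\le C_0\,\frac{\bk!\,\bb^\bk|\bk|_1^{|\bk|_1}}{\bk^\bk}\,\exp\big(\|b(\by)\|_{L_\infty(D)}\big).
\]
We will need that $u\in L_2(\RRi,V;\bgamma)$ and $\Dd_J^r u\in L_2(\RRi,V;\bgamma)$, so that Lemma~\ref{lemma:Dd^r_J v =Hermite} applies. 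This follows from the bound just displayed together with the finiteness of $\tilde A_r(J)$ in Lemma~\ref{lemma:A_r(J)-2}. Here $\bb\in\ell_1(\NN)$ ensures that $\|b(\by)\|_{L_\infty(D)}\le\sum_j b_j|y_j|$ is finite $\bgamma$-almost everywhere.

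\textbf{Step 2: coefficient bound.} Expand $\Dd_J^r=\prod_{j\in J}\sum_{i=1}^{2r}q_i(y_j)\partial_{y_j}^i$ and use \eqref{q_j}. One point needs care: the factor $(1+|y_j|)^r$ replaces $(1+y_j)^r$, and the integral defining $\tilde A_r(J)$ should carry $(1+|y_j|)^{2r}$. The proof of Lemma~\ref{lemma:A_r(J)-2} works unchanged with $|y|$, since it already bounds by $\exp(2b_0|y|)$. Parseval's identity and Lemma~\ref{lemma:Dd^r_J v =Hermite} then give, for $\supp(\bs)=J$,
\[
\|u_\bs\|_V\le \bs^{-r}\tilde A_r(J)\,C_0\,C_r^{|J|}\sum_{\supp(\bk)=J,\ |\bk|_\infty\le 2r}\frac{\bk!\,\bb^\bk|\bk|_1^{|\bk|_1}}{\bk^\bk}.
\]
Insert Lemma~\ref{lemma:A_r(J)-2}, the inequality $|\bk|_1^{|\bk|_1}/\bk^\bk\le |\bk|_1!\,e^{|\bk|_1}/\bk!$, and $\bk!\le((2r)!)^{|J|}$. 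This yields
\[
\|u_\bs\|_V\le \beta_\bs:=C_1C_2^{|J|}\,\bs^{-r}\sum_{\supp(\bk)=J,\ |\bk|_\infty\le 2r}\frac{(e\bb)^\bk|\bk|_1!}{\bk!},
\]
with $C_1:=C_0\exp\big(\|\bb\|_{\ell_2(\NN)}^2+\tfrac{\sqrt2}{\pi}\|\bb\|_{\ell_1(\NN)}\big)$ and $C_2:=(2r)!\,C_rK_{r,\bb}$. Note that $\bb\in\ell_1(\NN)$ implies $\bb\in\ell_2(\NN)$, so $C_1$ is finite. Note also that no condition $b_0<1/2$ is needed, because the Gaussian tail absorbs $\exp(2b_0|y|)$.

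\textbf{Step 3: summation.} Multiply $\beta_\bs$ by $p_\bs(\theta',\lambda)\le C_{\theta',\lambda}^{|J|}\bs^{\theta'}$, raise to the power $p$, and sum over $\bs$ with $\supp(\bs)=J$, choosing $r=r_{p,\theta'}$ so that \eqref{sum2} contributes $C_{p,\theta'}^{|J|}$. Use $(\sum x_i)^p\le\sum x_i^p$, valid since $p\le1$. Each factor of $|J|\le|\bk|_1$ is then absorbed into $\bk$ via $\bar\bb:=K\bb$, giving
\[
\sum_{\bs\in\FF}\big(p_\bs(\theta',\lambda)\beta_\bs\big)^p\le C_1^p\sum_{\bk\in\FF}\left(\frac{|\bk|_1!}{\bk!}\,\bar\bb^\bk\right)^p.
\]
Because $\|\bar\bb\|_{\ell_1(\NN)}<1$ and $\bar\bb\in\ell_p(\NN)$, \cite[Theorem~7.2]{CDS10} makes the right-hand side finite. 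The case $\theta'=0$ gives $\ell_p$-summability of $(\|u_\bs\|_V)_{\bs\in\FF}$. For the weighted statement, set $\sigma_\bs:=\beta_\bs^{p/2-1}$ and take $\theta'=\theta q/p=2\theta/(2-p)$, exactly as at the end of the proof of Theorem~\ref{thm: summability-ell_p-rho}.

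The main obstacle is the constant bookkeeping in Step 3. The per-coordinate constant $K$ must be absorbed so that it is controlled by the smallness hypothesis $\|\bb\|_{\ell_1(\NN)}<K^{-1}$. For this we use $C^{|J|}\le C^{|\bk|_1}$, valid when $C\ge1$ and $|J|\le|\bk|_1$.
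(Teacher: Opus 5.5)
Your proposal is correct and follows the paper's own route. The paper proves this theorem simply by transplanting the proof of Theorem~\ref{thm: summability-ell_p-b-L}, with Lemma~\ref{lemma:Dd^r_J v =Hermite}, \eqref{q_j} and Lemma~\ref{lemma:A_r(J)-2} in place of their Laguerre counterparts, and your adjustments are exactly the needed modifications: using $(1+|y_j|)^{2r}$ in $\tilde A_r(J)$ and $K_{r,\bb}$, dropping $b_0<1/2$, and checking $u,\Dd_J^r u\in L_2(\RRi,V;\bgamma)$.

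There is one bookkeeping slip, which you inherit from the paper. The factor $\sum_{\supp(\bs)=J}\bs^{-p(r-\theta')}\le C_{p,\theta'}^{|J|}$ enters the $p$-th power sum without an exponent $p$. Absorbing it into $(\bar{\bb}^{\bk})^p$ therefore requires $C_{p,\theta'}^{1/p}$ rather than $C_{p,\theta'}$ in $K$ when $p<1$; to see this, take $\bk$ equal to $1$ on $J$. This only changes the constant in the smallness hypothesis.
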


	\begin{remark} \label{remark4.1}
		{\rm Theorem~\ref{thm: summability-ell_p-rho-H} presents a sufficient condition for the $\ell_p$-summability $\brac{\norm{u_\bs}{V}}_{\bs\in \FF}\in \ell_p(\FF)$ for $0 < p < \infty$. Sufficient conditions for this $\ell_p$-summability considered 
				in \cite[Proposition 4.4]{HoSc14} for $0<p\le1$, and 
				in \cite[Theorem 1.2]{BCDM17} for $0<p<2$.
			For $0 <p<2$, the $\ell_p$-summability result of  \cite[Theorem 1.2]{BCDM17}  under the condition
			$\brho^{-1} \in \ell_q(\NN)$ with $q := 2p/(2-p)$, is better than the $\ell_p$-summability result of Theorem~\ref{thm: summability-ell_p-rho-H}.
			
			Consider the component function $\psi_j$ with arbitrary supports  in the parametric equation with log-normal random inputs  
			\eqref{SPDE}--\eqref{lognormal}(G), including the case of globally supported functions such as the Fourier series.
In this context, the sufficient condition  
		$\big(\norm{\psi_j}{L_\infty(D)}\big)_{j \in \NN}\in \ell_p(\NN)$ in 
		Theorem~\ref{thm: summability-ell_p-b-H} for the $\ell_p$-summability $\big(\norm{u_\bs}{V}\big)_{\bs \in \FF}\in \ell_p(\FF)$, significantly improves the sufficient conditions $\big(\norm{j\psi_j}{L_\infty(D)}\big)_{j \in \NN}\in \ell_p(\NN)$ in \cite{HoSc14},
		and $\big(\norm{j^\alpha\psi_j}{L_\infty(D)}\big)_{j \in \NN}\in \ell_p(\NN)$ with some 
		$\alpha > 1/2$, in \cite[Corollary 6.3]{BCDM17} and \cite[Section 3.6.2]{DNSZ2023}.
	}
	\end{remark}

 \medskip
\noindent
{\bf Acknowledgments:} 
The work  of Dinh D\~ung and Van Kien Nguyen is funded by the Vietnam National Foundation for Science and Technology Development (NAFOSTED) under  the Vietnamese--Swiss Joint Research Project, Grant No. IZVSZ2$_{ - }$229568. Viet Ha Hoang is supported by the Tier 1 grant RG24/23 and the Tier 2 grant MOE-T2EP20123-0008 (proposal T2EP20123-0047) awarded by the Singapore Ministry of Education.
A part of this work was done when the authors were working at the Vietnam Institute for Advanced Study in Mathematics (VIASM). They would like to thank the VIASM for providing a fruitful research environment and working condition.

\bibliographystyle{abbrv}
\bibliography{AllBib}

\end{document}